\documentclass[12pt]{article}

\usepackage[margin=1in]{geometry}
\usepackage{graphicx}

\usepackage{url}
\usepackage{hyperref}
\usepackage{comment}
\usepackage{amsthm}
\usepackage{amsmath}
\usepackage{amssymb}

\usepackage{mathtools}
\usepackage{enumitem}
\usepackage{cleveref}
\usepackage{thmtools}

\usepackage{xcolor}

\usepackage[sorting=nyt,sortcites=true,maxbibnames=9,autopunct=true,autolang=hyphen,hyperref=true,abbreviate=false, backend=biber]{biblatex}
\usepackage{mathrsfs}
\usepackage{tikz-cd}
\usepackage{cleveref}

\usepackage{amsthm}
\usepackage{amsfonts}
\usepackage{amsmath}
\usepackage{mathrsfs}
\usepackage{graphics}
\usepackage{graphicx}
\usepackage{hyperref}
\usepackage{comment}
\usepackage{color}

\usepackage{algorithm} 
\usepackage{algpseudocode}

\def\1{{\sf 1}}

\def\II{{\sf I}}
\def\k{{\sf k}}
\def\u{{\sf u}}

\def\x{{\sf x}}\def\y{{\sf y}}
\def\w{{\sf w}}\def\z{{\sf z}}
\DeclareMathOperator{\tr}{tr}
\DeclareMathOperator{\spec}{Spec}
\DeclareMathOperator{\vol}{vol} 
\DeclareMathOperator{\ncut}{NCut}
\DeclareMathOperator{\cut}{cut}

\newtheorem{proposition}{Proposition}[section]
\newtheorem{lemma}[proposition]{Lemma}
\newtheorem{theorem}[proposition]{Theorem}

\newtheorem{corollary}[proposition]{Corollary}
\newtheorem{example}[proposition]{Example}
\newtheorem{definition}[proposition]{Definition}
\newtheorem{obs}[proposition]{Remark}

\def\2{{\sf 1}}

\usepackage{hyperref}

\begin{document}
\newcommand{\Sparsify}{\sf{Sparsify}\kern1pt}

\title{Kemeny’s constant via matrix compression\\ and eigenvalue interlacing}



%

\author{
A. Abiad
\thanks{\texttt{a.abiad.monge@tue.nl}, Department of Mathematics and Computer Science, Eindhoven University of Technology, The Netherlands. Department of Mathematics and Data Science of Vrije Universiteit Brussel, Belgium}
\qquad
\'A. Carmona\thanks{\texttt{angeles.carmona@upc.edu}, Departament de Matem\`atiques, Universitat Polit\`ecnica de Catalunya, Spain}
\and
A. M. Encinas\thanks{\texttt{andres.marcos.encinas@upc.edu}, Departament de Matem\`atiques, Universitat Polit\`ecnica de Catalunya, Spain}
\and
E. Ghorbani\thanks{\texttt{ebrahim.ghorbani@tuhh.de}, Institute for Algorithms and Complexity, Hamburg University of Technology, Germany}
\and
M.J. Jim\'enez\thanks{\texttt{maria.jose.jimenez@upc.edu}, Departament de Matem\`atiques, Universitat Polit\`ecnica de Catalunya, Spain} 
\and
\'A. Samperio
\thanks{\texttt{alvaro.samperio@uva.es}, Instituto de Investigaci\'on en Matem\'aticas, Universidad de Valladolid, Spain} 
}

\date{}

\maketitle

\begin{abstract}
Kemeny's constant quantifies the expected time for a random walk to reach a randomly chosen vertex, capturing global properties of a Markov chain. We develop a matrix-analytic framework for bounding Kemeny's constant of a finite connected weighted graph using degree-weighted compressions of the normalized adjacency matrix, pinching inequalities, and eigenvalue interlacing. Our main partition theorem gives lower bounds in terms of the compressed spectrum, with complete equality characterizations, and converts structural graph information into spectrally computable estimates. For instance, when applied to proper color partitions, the proposed method yields a sharp lower bound on Kemeny's constant in terms of the chromatic number
\[
    K(G)\ge n-2+\frac{1}{\chi(G)},
\]
which extends a bipartite bound of Ciardo, Dahl, and Kirkland (2022) to arbitrary chromatic number, and which is incomparable with the normalized Hoffman bound by Chung (1997). For unweighted graphs, we also characterize all equality cases of this bound. We further illustrate the power of the proposed matrix framework by deriving spectral bounds for NP-hard graph problems involving normalized cuts and conductance. Our results include an asymptotically sharp conductance bound, two-sided interlacing bounds from principal submatrices and quotient matrices, and estimates for the change in Kemeny's constant under connectivity-preserving deletion of multiple edges.\\

\noindent \textbf{Keywords:}
Kemeny's constant; normalized Laplacian; matrix compression; eigenvalue interlacing; graph partitions

\noindent \textbf{AMS subject classifications:} 05C50, 05C81, 15A18, 15B51, 60J10

\end{abstract}


\section{Introduction}

Kemeny's constant $K(G)$ is a fundamental parameter in the theory of random walks on graphs, quantifying the expected number of steps required for a random walker to reach a target vertex sampled from the stationary distribution \cite{KeSn60,Lo16}. Originally defined by Kemeny and Snell for finite Markov chains \cite{KeSn60}, $K(G)$ serves as a global indicator of graph connectivity and mixing efficiency \cite{Lo16}. Since its introduction, Kemeny's constant has a wide range of applications, including modeling the spread of infectious diseases (predicting how rapidly an outbreak will reach epidemic levels, see e.g. \cite{Yetal2020,Detal2023}), molecular conformation dynamics (identifying the presence or absence of metastable sets, see e.g. \cite{MMLR2024}), and urban road networks (assessing how well connected a network is, see e.g. \cite{CKS2011}).

Understanding how structural graph properties and sub-configurations affect the behavior of $K(G)$ has recently received a considerable amount of attention, see e.g.\cite{BK2019,APY2021,derivative,ABCMP2023,BCK2022,FKK2022,KDP2024,KLMZ2024}. Recent work has explored the impact of structural changes on Kemeny's constant, ranging from local vertex and edge removals to graph compressions, bridge structural formulas, and network centralities. From a matrix-analytic perspective, Breen and Kirkland \cite{BK2019} studied the sensitivity of Kemeny's constant to perturbations of the transition matrix through a structured condition number, while Altafini et al.\ \cite{ABCMP2023} investigated variations of Kemeny's constant under edge removal and used them to define an edge centrality measure. These previous results illustrate the usefulness of matrix methods in understanding the sensitivity of Kemeny's constant to structural and probabilistic perturbations.

In this paper we develop an algebraic matrix-based approach to obtain sharp bounds on $K(G)$ from vertex partitions, edge deletions, and graph substructures. The central idea of the proposed approach is to exploit the spectral formulation of Kemeny's constant by projecting the normalized adjacency matrix of $G$ onto degree-weighted quotient spaces. This compression separates the spectral information associated with the chosen partition from that contained in its orthogonal complement, allowing structural properties of the graph to be translated into bounds on $K(G)$. In particular, whenever a graph parameter is characterized through a suitable vertex partition or graph substructure, our approach provides a way to relate that parameter to the spectrum and hence to Kemeny's constant. Thus, structural descriptions of graph parameters can be converted into spectrally computable bounds, even when the underlying parameter is difficult to determine exactly (here we will estimate several NP-hard parameters for an illustration). The matrix viewpoint is central: $K(G)$ can be written as a trace involving the normalized Laplacian, and our bounds follow from matrix compression, trace inequalities, and eigenvalue interlacing. In contrast with existing perturbation-based approaches, our method uses compressions associated with vertex partitions and substructures to extract spectral information directly from the graph structure.

For instance, when applied to proper color partitions, where each part forms an independent set, the proposed matrix compression strategy yields a sharp lower bound on Kemeny's constant governed by the chromatic number $\chi(G)$
\[
K(G) \ge n - 2 + \frac{1}{\chi(G)},
\]
which in turn extends a bound of Ciardo, Dahl, and Kirkland for bipartite graphs \cite{CiDaKi20}. We provide a complete characterization of the equality cases, proving that equality holds if and only if $G$ is a complete bipartite graph or a balanced complete $r$-partite graph. Equivalently, Kemeny's constant yields a new spectral lower bound on the chromatic number, a well known NP-hard graph parameter. Furthermore, a direct comparison reveals that our chromatic number bound and the classical normalized Hoffman bound on the chromatic number (see \cite[Theorem~6.7]{Chung1997} and \cite{ElphickWocjan2015}) are in general incomparable. Hence, our estimate provides an alternative sharp spectral bound on $\chi(G)$ rather than a refinement of the normalized Hoffman bound. The same projection method extends directly to partitions defined by graph separators, providing explicit lower bounds on $K(G)$ via the normalized cut introduced in \cite{ShiMalik2000} and the graph conductance.

In the final section of the paper, we use several eigenvalue interlacing techniques to estimate structural perturbations of Kemeny's constant. By applying Cauchy interlacing and quotient matrix interlacing to the normalized adjacency matrix, we establish tight two-sided bounds on $K(G)$ derived from principal submatrices and quotient partitions, yielding a sharp upper bound parameterized by local edge degrees. Moreover, by using the normalized Laplacian edge-interlacing technique of Hall, Patel, and Stewart \cite{edgeinterlacing}, we extend the analysis from single-edge deletions to multi-edge removals. This provides spectral bounds on the variation of Kemeny's constant under connectivity-preserving edge deletion, linking graph conductance directly to the structural sensitivity of $K(G)$.

\section{Preliminaries}

 In this paper, $G=(V,E,c)$ is a finite, simple, connected, undirected weighted graph, where $c:V\times V\longrightarrow[0,\infty)$ assigns a positive symmetric weight $c(i,j)=c(j,i)=c_{ij}>0$ to each edge $\{i,j\}\in E$ and $c_{ij}=0$ otherwise. The unweighted graph $(V,E)$ is called the \emph{underlying graph} of $G$. We assume that $|V|=n\ge2$; in particular, there are no isolated vertices or loops. Unless explicitly stated otherwise, all results below are formulated for weighted graphs, including the compression, chromatic, cut, interlacing, and edge-weight reduction results. Graph parameters such as the chromatic number $\chi(G)$, independence number $\alpha(G)$, color classes, and cliques refer to the underlying graph. Results or examples that require unit edge weights are identified explicitly.
Recall that the \emph{degree} of vertex $i$ is $$k_i=\sum\limits_{j=1}^nc_{ij},$$
and the \emph{volume} of $G$ is
$${\rm vol}(G)=\displaystyle\sum\limits_{i=1}^nk_i,$$
and in the unweighted case ${\rm vol}(G)=2|E|.$ 
For $S\subseteq V$, we write
${\rm vol}(S)=\sum_{i\in S}k_i$
and
$${\rm cut}(S,\bar S)=\sum_{u\in S,v\in\bar S}c_{uv}.$$
Thus, volumes and cuts are defined using edge weights; in the unweighted case, the cut is simply the number of edges between $S$ and $\bar S$.

The \emph{adjacency matrix} $A$ of a graph $G$ is the $n\times n$ symmetric matrix with elements $c_{ij}$.  We denote by ${\sf e}^i$ the $i$-th vector of the standard basis. Throughout, vectors are represented using {\it sans serif} style. For instance, $\k$ is the vector of degrees of vertices of $G$. A graph is called {\it regular} if the degree vector is constant. Additionally, for any real number $\alpha$, we define $\alpha^{\#}$ as $\dfrac{1}{\alpha}$ when $\alpha\not=0,$ and $0$ otherwise.  For a square matrix $N$ whose group inverse exists,   denote its \emph{group inverse} by $N^\#$.  For a real symmetric matrix it always exists and coincides with the Moore--Penrose inverse.

The {\it Laplacian matrix} $L_G$ of $G$ is the $n\times n$ symmetric matrix given by $L_G=D-A$, where $D=\text{diag}(k_1,\ldots,k_n)$ is the diagonal degree matrix. The \emph{normalized Laplacian matrix} of a graph, denoted ${\cal{L}}_G$, is defined as
$${\cal{L}}_G=D^{-1/2}L_GD^{-1/2}=I-D^{-1/2}AD^{-1/2},$$
where $M_G=D^{-\frac{1}{2}}AD^{-\frac{1}{2}}$ is called {\it normalized adjacency matrix}.

A random walk on $G$ induces a Markov chain with \emph{transition probability matrix} $P$, defined as $P=D^{-1}A$. For a finite, irreducible Markov chain, the transition probability matrix 
 $P$, its steady state probability vector $\pi$, and the all-ones vector $\sf 1$ satisfy $P{\sf 1}={\sf 1}$ and $\pi^\top P=\pi^\top$. It is known that $$\pi_i=\dfrac{k_i}{{\rm vol}(G)}.$$ 
 
Kemeny's constant is a fundamental parameter in the study of Markov chains and random walks on graphs. It quantifies the expected number of steps a random walker takes to reach a randomly chosen vertex, regardless of the starting position, under the stationary distribution. Intuitively, Kemeny's constant provides a global measure of how efficiently information or influence propagates through a graph. It is particularly useful in assessing the connectivity and mixing properties of a graph. Kemeny and Snell \cite{KeSn60} established a direct connection between Kemeny's constant and random walks
\begin{equation}\label{eq:pim} \tilde{K}(G)=\sum_{i=1}^{n} \pi_i m_{ji}, \end{equation}
where $m_{ji}$ denotes the \emph{mean first passage time} (the expected number of steps required for a random walk to reach vertex 
$i$, starting from vertex $j$). Notably, this sum is constant regardless of the starting vertex $j$.

Some authors define Kemeny's constant as $K(G)=\tilde{K}(G)-1$. We will use the value $K(G)$ in this work.

It is known that the Kemeny constant can be expressed by means of the eigenvalues of several matrices associated with a graph. In particular, there is a way to compute Kemeny's constant by using the eigenvalues of the
probability transition matrix. First, observe that since $P$ is similar to the  normalized adjacency matrix, they have the same eigenvalues. Because the normalized adjacency matrix is symmetric all its eigenvalues, and so the eigenvalues of $P$, are real.

 We denote the eigenvalues of the normalized adjacency matrix as 
 $$1=\lambda_1 > \lambda_2\ge\cdots \geq \lambda_n\ge -1.$$
 The eigenvalues of the normalized Laplacian matrix ${\cal{L}}_G$  are denoted by 
 $$2\ge\mu_1\ge\cdots\ge\mu_{n-1}>\mu_n=0,$$
 and they satisfy the relationship $\mu_i = 1 - \lambda_{n-i+1}$, for $i = 1, \ldots, n$. 

 It is well-known that Kemeny's constant can be computed in terms of eigenvalues of the normalized Laplacian as
 \begin{equation}\label{Lovasz}
  K(G)=\sum_{j=1}^{n-1}\frac{1}{\mu_j}=\sum_{j=2}^{n}\frac{1}{1-\lambda_j}\end{equation}
	and hence $K(G)={\rm tr}({\cal L}_G^\#)$; 
  see \cite[Eq. (3.3)]{Lo16}. Furthermore, Wang, et al. \cite[Theorem 4]{Wa17} showed that Kemeny's constant can also be expressed in terms of the group inverse of the combinatorial Laplacian, specifically, 

\begin{equation*}  \label{thm:wangetal}
K(G)={\rm tr}(L_G^\# D)-\dfrac{1}{\rm{vol}(G)}{\k}^{\sf T}L_G^{\#}{{\sf k}}.
\end{equation*}
 The same identity was  obtained in \cite{CaJiMa23} using a different approach and later generalized for the case of Schr\"odinger random walks in \cite{CaEnJiMa24}.

\section{Vertex partitions and spectral compression}\label{sec:vertex-partitioning}

In this section, we study Kemeny’s constant through the spectral information carried by partitions of the vertex set. Given a partition of $V(G)$, we compress the normalized adjacency matrix to the subspace generated by the parts. The resulting degree-weighted quotient matrix describes how the parts interact, while the complementary subspace contains the remaining spectral information.

We combine this decomposition with a pinching inequality to control the contribution of the two subspaces to Kemeny’s constant. This leads to our main partition theorem, including a characterization of the equality case. A closed form of the bound is then obtained by applying Cauchy--Schwarz to the eigenvalues of the quotient matrix.

In the next section, we apply this result to several natural partitions, including color partitions and partitions associated with normalized cut and conductance.

\subsection{A pinching inequality}
We use the notation $A\succ0$ and $A\succeq0$ to indicate that the matrix $A$ is positive definite and positive semidefinite, respectively.  More generally, $A\succeq B$ means that $A-B\succeq0$.

A \emph{pinching map} is a linear map that, with respect to a fixed block decomposition, deletes all off-diagonal blocks and keeps only the diagonal ones. For example, the map
$$\begin{pmatrix}
	A_{11}&A_{12}\\
	A_{21}&A_{22}
\end{pmatrix}
\longmapsto
\begin{pmatrix}
	A_{11}&0\\
	0&A_{22}
\end{pmatrix}$$
is a pinching map.
More explicitly, if $P_1$ and $P_2$ are the orthogonal projections onto the two block-coordinate subspaces, then the corresponding pinching map is
$$\Phi(A)=P_1AP_1+P_2AP_2.$$
Since the function $f(t)=t^{-1}$ is \emph{operator convex} on $(0,\infty)$, Davis's pinching inequality \cite{Davis1957}, or equivalently Jensen's operator inequality \cite{HansenPedersen2003}, gives
$$\Phi(A^{-1})\succeq\Phi(A)^{-1}$$
for every positive definite matrix $A$.

For this inverse-function pinching inequality, equality holds precisely when $A$ commutes with the orthogonal projections defining the pinching map, as is proved directly below. In the two-block decomposition
$$
A=
\begin{pmatrix}
	A_{11}&X\\
	X^\top&A_{22}
\end{pmatrix},
$$
this is equivalent to $X=0$.

To avoid introducing the language and machinery of operator theory, we give below a self-contained proof of the particular pinching inequality needed here, including the characterization of equality.

We shall use the following two standard facts concerning positive
definite matrices and Schur complements; see Theorem~7.7.7 and Corollary~7.7.4(a) in \cite{HornJohnson}.

\begin{itemize}
	\item[(S1)] If
	$N=	\begin{pmatrix}
		P&R\\
		R^\top&Q\end{pmatrix}$	is symmetric and $P\succ0$, then $N\succ0$ if and only if its Schur
	complement	$Q-R^\top P^{-1}R$ is positive definite.
	\item[(S2)] If $B\succeq A\succ0$, then	$A^{-1}\succeq B^{-1}$.
\end{itemize}

\begin{lemma}\label{lem:pinching}
	Let	$$N=\begin{pmatrix} N_1 & X\\ X^\top & N_2\end{pmatrix}$$
	be real symmetric with $I-N$ positive definite. Then
	$$\tr (I-N)^{-1}\ \ge\ \tr (I-N_1)^{-1}+\tr (I-N_2)^{-1},$$
	with equality if and only if $X=0$.
\end{lemma}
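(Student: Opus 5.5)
Set $A=I-N\succ0$, written in block form as
$$A=\begin{pmatrix} A_1 & -X\\ -X^\top & A_2\end{pmatrix},\qquad A_1=I-N_1,\quad A_2=I-N_2 .$$
Both $A_1$ and $A_2$ are principal submatrices of a positive definite matrix, so they are positive definite. The plan is to compute the diagonal blocks of $A^{-1}$ with the block inverse formula and compare them with $A_1^{-1}$ and $A_2^{-1}$ using (S1) and (S2).

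\textbf{Step 1 (block inverse).} Let $S_1=A_1-XA_2^{-1}X^\top$ be the Schur complement of $A_2$ in $A$. By (S1), applied after reordering the blocks, $S_1\succ0$. The standard block inverse formula then says that the $(1,1)$ block of $A^{-1}$ is $S_1^{-1}$. Symmetrically, the $(2,2)$ block of $A^{-1}$ is $S_2^{-1}$, where $S_2=A_2-X^\top A_1^{-1}X\succ0$. Since the trace only sees diagonal blocks,
$$\tr A^{-1}=\tr S_1^{-1}+\tr S_2^{-1}.$$

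\textbf{Step 2 (comparison).} Because $A_2^{-1}\succ0$, the matrix $XA_2^{-1}X^\top$ is positive semidefinite. Hence $A_1\succeq S_1\succ0$, and (S2) gives $S_1^{-1}\succeq A_1^{-1}$. Likewise $S_2^{-1}\succeq A_2^{-1}$. The trace is monotone under $\succeq$, so $\tr S_i^{-1}\ge \tr A_i^{-1}$ for $i=1,2$. Adding these gives the inequality.

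\textbf{Step 3 (equality case).} If $X=0$, then $A$ is block diagonal and equality is immediate. Conversely, suppose equality holds. Then $\tr(S_1^{-1}-A_1^{-1})=0$, and a positive semidefinite matrix with zero trace is zero, so $S_1^{-1}=A_1^{-1}$. Therefore $S_1=A_1$, which means $XA_2^{-1}X^\top=0$. Factoring, this reads $(A_2^{-1/2}X^\top)^\top(A_2^{-1/2}X^\top)=0$, so $A_2^{-1/2}X^\top=0$. Since $A_2^{-1/2}$ is invertible, $X=0$.

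The only real care needed is in Step 1: recalling the block inverse formula correctly, and checking via (S1) that the Schur complements are invertible. The rest is routine.
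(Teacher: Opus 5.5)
Your proposal is correct and follows essentially the same route as the paper: Schur-complement formulas for the diagonal blocks of $(I-N)^{-1}$, comparison with $(I-N_i)^{-1}$ via (S2), and the equality case from a zero-trace positive semidefinite difference forcing $XA_2^{-1}X^\top=0$, hence $X=0$. Your explicit use of (S1) to show that the Schur complements are positive definite, and your explicit check of the direction $X=0\Rightarrow$ equality, are small clarifications that the paper leaves implicit.
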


\begin{proof}
		Let $A = I - N$. We  express $A$ as the following partitioned matrix:
		$$A = \begin{pmatrix} I-N_1 & -X \\ -X^\top & I-N_2 \end{pmatrix} = \begin{pmatrix} A_{11} & A_{12} \\ A_{12}^\top & A_{22} \end{pmatrix}.$$
		We have $I - N$  positive definite, i.e. $A \succ 0$, so its principal submatrices are also positive definite. Thus,  
        $A_{11} \succ 0$ and $A_{22} \succ 0$.

		The inverse of the symmetric block matrix $A$ can be expressed using its Schur complements (see \cite[p.~18]{HornJohnson}). The diagonal blocks of $A^{-1}$ are given by:
		$$
		(A^{-1})_{11} = \left(A_{11} - A_{12} A_{22}^{-1} A_{12}^\top\right)^{-1},\qquad 
        (A^{-1})_{22}=\left(A_{22} - A_{12}^\top A_{11}^{-1} A_{12}\right)^{-1}.		$$
		Taking the trace we obtain:
		$$\tr (A^{-1}) = \tr \left((A_{11} - A_{12} A_{22}^{-1} A_{12}^\top)^{-1}\right) + \tr \left((A_{22} - A_{12}^\top A_{11}^{-1} A_{12})^{-1}\right).$$
	Since $A_{22} \succ 0$, its inverse is also positive definite. Consequently, the congruent matrix term $A_{12} A_{22}^{-1} A_{12}^\top$ is positive semi-definite.
		Subtracting this from $A_{11}$ results in
		$A_{11} - A_{12} A_{22}^{-1} A_{12}^\top \preceq A_{11}$.
	By (S2), we obtain:
		$(A_{11} - A_{12} A_{22}^{-1} A_{12}^\top)^{-1} \succeq A_{11}^{-1}$
		and thus
		$$\tr \left((A_{11} - A_{12} A_{22}^{-1} A_{12}^\top)^{-1}\right) \ge \tr (A_{11}^{-1}).$$
	Similarly
		$$\tr \left((A_{22} - A_{12}^\top A_{11}^{-1} A_{12})^{-1}\right) \ge \tr (A_{22}^{-1}).$$
		Summing these two inequalities results in
		$$\tr (A^{-1}) \ge \tr (A_{11}^{-1}) + \tr (A_{22}^{-1}).$$

    For equality to hold, the individual trace inequalities must hold with equality
		\begin{equation}\label{eq:trA11}
			\tr \left((A_{11} - A_{12} A_{22}^{-1} A_{12}^\top)^{-1}\right) - \tr (A_{11}^{-1}) = 0
		\end{equation}
		Because the difference matrix $(A_{11} - A_{12} A_{22}^{-1} A_{12}^\top)^{-1} - A_{11}^{-1}$ is positive semi-definite, its trace can only be zero if the matrix itself is the zero matrix. Thus \eqref{eq:trA11} is equivalent with $A_{11} - A_{12} A_{22}^{-1} A_{12}^\top = A_{11}$
		which simplifies to $A_{12} A_{22}^{-1} A_{12}^\top = 0$.
		Since $A_{22}^{-1} \succ 0$, this quadratic form equals zero if and only if $A_{12} = 0$. Given that $A_{12} = -X$, the equality holds if and only if $X = 0$.
\end{proof}

\subsection{Subspace compressions}

Orthogonal compression is a fundamental technique in spectral graph
theory. If $N$ is a real symmetric matrix and $S$ is a matrix with
orthonormal columns, then
$B=S^\top NS$
is called the \emph{compression} of $N$ to the column space of $S$.
Quotient matrix interlacing (Corollary~\ref{cor:2.3Haemerspaper}) is a
special case of this construction. 



\begin{definition}
\label{def:weighted-quotient}
Let $\mathcal P=\{V_1,\dots,V_m\}$ be a partition of $V(G)$ into nonempty parts.
The \emph{degree-weighted quotient matrix} of $\mathcal P$ is the $m\times m$
symmetric matrix $B=B(\mathcal P)$ with entries
$$B_{ij}={\sf s}_i^{\top}M{\sf s}_j,\qquad\text{where}\qquad
{\sf s}_i=\frac{1}{\sqrt{\vol(V_i)}}\,D^{1/2}\1_{V_i},$$
that is,
$$B_{ii}=\frac{2e(V_i)}{\vol(V_i)},\qquad
B_{ij}=\frac{e(V_i,V_j)}{\sqrt{\vol(V_i)\vol(V_j)}}
\quad(i\ne j),$$
where 
$e(V_i,V_j)=\sum_{u\in V_i,\,v\in V_j}c_{uv}$ and $e(V_i)=\sum_{\{u,v\}\subseteq V_i}c_{uv}$ are total edge weights (edge counts in the unweighted case). Here and below $M=M_G$.
We write
$$\tau(\mathcal P)\ \coloneqq\ \tr B(\mathcal P)
=\sum_{i=1}^{m}\frac{2e(V_i)}{\vol(V_i)},$$
which measures how much of the volume of $G$ is absorbed inside
the parts. In particular $\tau(\mathcal P)=0$ if and only if every part is an
independent set, that is, if and only if $\mathcal P$ is the color partition of
a proper coloring of $G$.
\end{definition}

 We recall that if $a_1,\ldots,a_p$ are positive reals, then the Cauchy--Schwarz inequality implies
 $\bigl(\sum_{i=1}^p\frac{1}{a_i}\bigr)\bigl(\sum_{i=1}^pa_i\bigr)\ge p^2$, or equivalently
 \[\sum_{i=1}^p\frac{1}{a_i}\ge \frac{p^2}{\sum_{i=1}^pa_i}.\]
 Equality holds if and only if $a_1=\cdots=a_p$.
 We will use both the inequality and this equality condition.

\begin{theorem}[Partition bound]\label{thm:partition-pinching}
Let $G$ be a connected weighted graph of order $n\ge3$ and let
$\mathcal P=\{V_1,\dots,V_m\}$ be a partition of $V(G)$ into $2\leq m\le n-1$ nonempty
parts. Let $1=\theta_1>\theta_2\ge\cdots\ge\theta_m$ be the eigenvalues of $B=B(\mathcal P)$ and put $\tau=\tau(\mathcal P)$ and $M=M_G$.
Then
\begin{equation}\label{eq:partition-sharp}
K(G) \ge \sum_{i=2}^{m}\frac{1}{1-\theta_i}+\frac{(n-m)^2}{n-m+\tau},
\end{equation}
with equality if and only if $\spec(M)=\spec(B)\uplus\{\rho^{(n-m)}\}$,  where $\rho=-\tau/(n-m)$.
\end{theorem}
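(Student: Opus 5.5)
Let $S=[{\sf s}_1,\dots,{\sf s}_m]$. Since the vectors ${\sf s}_i$ have disjoint supports and unit norm, $S$ has orthonormal columns. Complete it to an orthogonal matrix $Q=[S\;T]$, where $T$ is $n\times(n-m)$. Then
\[
Q^\top MQ=\begin{pmatrix} B & X\\ X^\top & C\end{pmatrix},\qquad X=S^\top MT,\ C=T^\top MT .
\]
The top eigenvector of $M$ is $D^{1/2}\1/\sqrt{\vol(G)}=\sum_i\sqrt{\vol(V_i)/\vol(G)}\,{\sf s}_i$, which lies in the column space of $S$. Hence $\theta_1=1$ is an eigenvalue of $B$ with eigenvector ${\sf w}=(\sqrt{\vol(V_i)/\vol(G)})_i$, and $X^\top{\sf w}=T^\top M S{\sf w}=T^\top(D^{1/2}\1/\sqrt{\vol G})=0$.

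To use Lemma~\ref{lem:pinching} I must remove the eigenvalue $1$, since $I-M$ is singular. The plan is to work on the orthogonal complement of $u=S{\sf w}$. Choose an orthonormal basis $S'$ (an $n\times(m-1)$ matrix) of the orthogonal complement of $u$ inside the column space of $S$, so that $S'^\top MS'=:B'$ has spectrum $\{\theta_2,\dots,\theta_m\}$. Here I use that $B$ fixes ${\sf w}$, so $B$ restricted to ${\sf w}^\perp$ has the remaining eigenvalues. On $u^\perp$, with basis $[S'\;T]$, the compression $N$ of $M$ has spectrum $\{\lambda_2,\dots,\lambda_n\}$, and $I-N\succ0$ because $G$ is connected. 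Lemma~\ref{lem:pinching} together with \eqref{Lovasz} then gives
\[
K(G)=\tr(I-N)^{-1}\ \ge\ \tr(I-B')^{-1}+\tr(I-C)^{-1}=\sum_{i=2}^m\frac{1}{1-\theta_i}+\tr(I-C)^{-1},
\]
with equality if and only if $S'^\top MT=0$. Since $u^\top MT=u^\top T=0$ anyway, this condition is equivalent to $X=0$.

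Next I bound $\tr(I-C)^{-1}$. The matrix $I-C$ is a principal block of $I-N\succ0$, so its eigenvalues $a_j=1-\gamma_j$ are positive. Applying Cauchy--Schwarz to these $p=n-m$ numbers gives $\tr(I-C)^{-1}\ge (n-m)^2/\tr(I-C)$. Because $\tr M=0$ (the graph has no loops), $\tr C=-\tr B=-\tau$, so $\tr(I-C)=n-m+\tau$. This yields \eqref{eq:partition-sharp}.

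For the equality characterization, equality holds if and only if both $X=0$ and all $\gamma_j$ are equal, which forces $\gamma_j=\rho=-\tau/(n-m)$. Under these conditions $Q^\top MQ=B\oplus\rho I$, so $\spec(M)=\spec(B)\uplus\{\rho^{(n-m)}\}$. Conversely, suppose the spectrum splits this way. Then $\sum_{i\ge2}(1-\theta_i)^{-1}+(n-m)/(1-\rho)$ equals $K(G)$, and a direct computation gives $(n-m)/(1-\rho)=(n-m)^2/(n-m+\tau)$, so the bound is attained. I expect the main obstacle to be this converse direction. It is settled here by noting that the right-hand side of \eqref{eq:partition-sharp} evaluates exactly to $K(G)$ from the spectrum alone, so no structural argument about $X$ is needed. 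A second point needing care is to check that $1-\rho>0$, which follows from $\rho$ being an eigenvalue of $M$ different from $\lambda_1$ (the eigenvalue $1$ is simple and is already counted in $\spec(B)$).
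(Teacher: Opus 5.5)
Your proof is correct and follows the paper's argument. You compress $M$ to $\u^{\perp}=U_0\oplus W$, apply Lemma~\ref{lem:pinching}, and finish with Cauchy--Schwarz on the eigenvalues of $C$ using $\tr C=-\tau$. You use the ordering $1=\theta_1>\theta_2$ without proving it, but it follows from $\II-B'\succ0$, since this is a principal block of $\II-N$.

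The one real difference is the converse of the equality case. You evaluate $K(G)$ directly from the assumed spectrum, whereas the paper compares sums of squared eigenvalues to force $X=0$ and $C=\rho\II$. Your route is shorter and suffices for the stated equivalence; the paper's also recovers those structural conditions.
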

\begin{proof}
Since $G$ is connected, $\vol(V_i)>0$ for each $i$, so the vectors
${\sf s}_1,\dots,{\sf s}_m$ are well defined; they form an orthonormal system
because the $V_i$ are pairwise disjoint and each ${\sf s}_i$ is a unit vector.
Let $U=\operatorname{span}\{{\sf s}_1,\dots,{\sf s}_m\}$ and put
$$\u=\frac{1}{\sqrt{\vol(G)}}\,D^{1/2}\1 .$$
As $A\1=D\1$ we have $M\u=\u$, and $\|\u\|=1$, so $\u$ is a unit eigenvector of
$\mathcal L_G=I-M$ for the eigenvalue $\mu_n=0$. Moreover,
\begin{equation}\label{eq:u-in-U}
\u=\sum_{i=1}^{m}c_i{\sf s}_i,\qquad
c_i=\sqrt{\vol(V_i)/\vol(G)},
\end{equation}
so $\u\in U$. Let $R$ be an $n\times(n-1)$ matrix whose columns form an orthonormal basis of
$\u^{\perp}$. Then $R^{\top}\mathcal L_GR$ has spectrum $\mu_1,\dots,\mu_{n-1}$, all
positive, hence is positive definite, and
\begin{equation}\label{eq:kemeny-reduced-laplacian}
K(G)=\tr \!\big((R^{\top}\mathcal L_GR)^{-1}\big).
\end{equation}
Set $\mathsf c=(c_1,\dots,c_m)^{\top}$. Using $M\u=\u$, \eqref{eq:u-in-U} and
the orthonormality of the ${\sf s}_i$,
$$(B\mathsf c)_i=\sum_{j=1}^{m}({\sf s}_i^{\top}M{\sf s}_j)c_j
={\sf s}_i^{\top}M\Big(\sum_{j=1}^{m}c_j{\sf s}_j\Big)
={\sf s}_i^{\top}M\u={\sf s}_i^{\top}\u=c_i,$$
so $B\mathsf c=\mathsf c$ and $1$ is an eigenvalue of $B$.
Writing ${\sf S}=({\sf s}_1\ \cdots\ {\sf s}_m)$, for any $\y\in\mathbb R^m$ we have
$$\y^{\top}B\y=({\sf S}\y)^{\top}M({\sf S}\y)\le\|\y\|^2.$$
Equality requires ${\sf S}\y$ to be a multiple of $\u$, since the eigenvalue $1$ of $M$ is simple. Hence $1$ is also the simple largest eigenvalue of $B$, and $\theta_i<1$ for $i\ge2$.
As $\tr B=\tau$,
\begin{equation}\label{eq:sum-theta}
\theta_2+\cdots+\theta_m=\tau-1 .
\end{equation}

Decompose $\u^{\perp}=U_0\oplus W$, where $U_0=U\cap\u^{\perp}$ has dimension
$m-1$ and $W=U^{\perp}$ has dimension $n-m$. Choosing
\begin{equation}\label{eq:Def-R}
R=\begin{pmatrix}R_{U_0}&R_W\end{pmatrix},
\end{equation}
whose blocks are orthonormal bases of $U_0$ and $W$, we obtain
\begin{equation}\label{eq:RtMR}
R^{\top}MR=\begin{pmatrix}B_0&X\\X^{\top}&C\end{pmatrix}.
\end{equation}
For every $\w\in U_0$ we have $\u^{\top}M\w=(M\u)^{\top}\w=\u^{\top}\w=0$, so
the compression of $M$ to $U$ decomposes, in an appropriate basis for
$U=\operatorname{span}(\u)\oplus U_0$, as the direct sum of $[1]$ and
$B_0=R_{U_0}^{\top}MR_{U_0}$. Hence the eigenvalues of $B_0$ are exactly
$\theta_2,\dots,\theta_m$, and $\tr(R^{\top}MR)=\tr M-\u^{\top}M\u=-1$ gives, with
	\eqref{eq:sum-theta},
	\begin{equation}\label{eq:trC}
		\tr C=-\tau .
	\end{equation}
Since $\mathcal L_G=I-M$,
$$R^{\top}\mathcal L_GR=I-R^{\top}MR
=\begin{pmatrix}\II-B_0&-X\\-X^{\top}&\II-C\end{pmatrix}\succ0 .$$
Applying \Cref{lem:pinching} to $R^{\top}MR$ and using
\eqref{eq:kemeny-reduced-laplacian},
\begin{equation}\label{eq:pinch}
K(G) \ge \tr (\II-B_0)^{-1}+\tr (\II-C)^{-1}
=\sum_{i=2}^{m}\frac{1}{1-\theta_i}+\tr (\II-C)^{-1},
\end{equation}
with equality if and only if $X=0$.
 As $\II-C\succ0$, Cauchy--Schwarz and
	\eqref{eq:trC} give
	\begin{equation}\label{eq:CS-C}
		\tr (\II-C)^{-1}\ \ge\
		\frac{(n-m)^2}{\tr (\II-C)}=\frac{(n-m)^2}{n-m+\tau},
	\end{equation}
	with equality if and only if $C=\rho\,\II$, where $\rho=-\tau/(n-m)$. This proves
	\eqref{eq:partition-sharp}.
	
	If equality holds throughout then $X=0$ and $C=\rho\II$, so
	$$\spec(M)=\{1\}\uplus\spec(B_0)\uplus\spec(C)
	=\spec(B)\uplus\{\rho^{(n-m)}\}.$$
    Conversely, assume that identity;
	then $\spec(R^{\top}MR)=\spec(B_0)\uplus\{\rho^{(n-m)}\}$,
	and comparing sums of squared eigenvalues,
	$$\tr (B_0^2)+2\tr (X^{\top}X)+\tr (C^2)
	=\tr (B_0^2)+(n-m)\rho^{2}.$$
	By \eqref{eq:trC} and Cauchy--Schwarz,
	$\tr (C^2)\ge\tau^{2}/(n-m)=(n-m)\rho^{2}$, so
	$\tr (X^{\top}X)\le0$. Hence $X=0$ and
	$\tr (C^2)=(n-m)\rho^{2}$, i.e.\ $C=\rho\II$, and equality holds in
	both \eqref{eq:pinch} and \eqref{eq:CS-C}.
\end{proof}

\begin{theorem}\label{thm:partition-closed}
	With the notation of \Cref{thm:partition-pinching}, we have that
	\begin{equation}\label{eq:partition-closed}
		K(G)\ \ge\ \frac{(m-1)^2}{m-\tau}+\frac{(n-m)^2}{n-m+\tau},
	\end{equation}
	with equality if and only if
	$\spec(M)=\bigl\{1,\big(\tfrac{\tau-1}{m-1}\big)^{(m-1)},\rho^{(n-m)}\bigr\}$.
\end{theorem}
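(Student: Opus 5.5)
We start from the bound \eqref{eq:partition-sharp} of \Cref{thm:partition-pinching} and estimate only its first term. By \eqref{eq:sum-theta}, $\theta_2+\cdots+\theta_m=\tau-1$. Since $\theta_i<1$ for $i\ge2$, the numbers $a_i=1-\theta_i$, $i=2,\dots,m$, are positive, and
$$\sum_{i=2}^m a_i=(m-1)-(\tau-1)=m-\tau>0.$$
The Cauchy--Schwarz inequality recalled before \Cref{thm:partition-pinching}, applied with $p=m-1$, then gives
$$\sum_{i=2}^{m}\frac{1}{1-\theta_i}\ge\frac{(m-1)^2}{m-\tau},$$
with equality if and only if all $\theta_i$, $i\ge2$, are equal, i.e.\ equal to $(\tau-1)/(m-1)$. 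Substituting into \eqref{eq:partition-sharp} yields \eqref{eq:partition-closed}.

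For the equality characterization, note that \eqref{eq:partition-closed} is obtained by chaining two inequalities. Hence equality holds if and only if both hold with equality:
\begin{itemize}
\item equality in \eqref{eq:partition-sharp}, i.e.\ $\spec(M)=\spec(B)\uplus\{\rho^{(n-m)}\}$ by \Cref{thm:partition-pinching};
\item $\theta_2=\cdots=\theta_m=(\tau-1)/(m-1)$, i.e.\ $\spec(B)=\{1,((\tau-1)/(m-1))^{(m-1)}\}$.
\end{itemize}
Together these give the stated spectrum of $M$, which proves the forward direction.

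The converse is the only point needing care. Assume $\spec(M)=\{1,\beta^{(m-1)},\rho^{(n-m)}\}$ with $\beta=(\tau-1)/(m-1)$. We must show that $\spec(B)$ consists of $1$ and $\beta^{(m-1)}$. Here $\beta$ and $\rho$ may coincide, or the multiset need not split visibly, so the argument must not rely on separating the eigenvalues.

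The plan is to rerun the trace-of-squares argument from the proof of \Cref{thm:partition-pinching} directly on $R^\top MR$ in the block form \eqref{eq:RtMR}. Its spectrum is $\{\beta^{(m-1)},\rho^{(n-m)}\}$, so
$$\tr(B_0^2)+2\tr(X^\top X)+\tr(C^2)=(m-1)\beta^2+(n-m)\rho^2.$$
Since $\tr B_0=\tau-1$ and $\tr C=-\tau$, Cauchy--Schwarz gives
$$\tr(B_0^2)\ge\frac{(\tau-1)^2}{m-1}=(m-1)\beta^2,\qquad \tr(C^2)\ge\frac{\tau^2}{n-m}=(n-m)\rho^2.$$
It follows that $X=0$, $B_0=\beta\,\II$ and $C=\rho\,\II$. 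Therefore $\spec(B)=\{1\}\uplus\spec(B_0)=\{1,\beta^{(m-1)}\}$ and $\spec(M)=\spec(B)\uplus\{\rho^{(n-m)}\}$. So equality holds in both \eqref{eq:partition-sharp} and the first-term estimate, which completes the characterization.
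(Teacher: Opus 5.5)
Your proposal is correct. The derivation of \eqref{eq:partition-closed} and the forward direction of the equality case are exactly the paper's: Cauchy--Schwarz on $1-\theta_2,\dots,1-\theta_m$, whose sum is $m-\tau$, chained with \Cref{thm:partition-pinching}.

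You diverge from the paper on the converse. You rerun the trace-of-squares argument on $R^{\top}MR$ to force $X=0$, $B_0=\beta\II$ and $C=\rho\II$, with $\beta=(\tau-1)/(m-1)$. This is valid, since $\tr B_0=\tau-1$ and $\tr C=-\tau$ hold for every partition.

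The paper instead evaluates \eqref{Lovasz} directly on the assumed spectrum. Since $\beta<1$ and $\rho\le 0$, this gives $K(G)=\frac{m-1}{1-\beta}+\frac{n-m}{1-\rho}$. The identities $1-\beta=\frac{m-\tau}{m-1}$ and $1-\rho=\frac{n-m+\tau}{n-m}$ turn this into the right-hand side of \eqref{eq:partition-closed}. That right-hand side depends only on $n$, $m$ and $\tau$, so there is no need to recover $\spec(B)$ from $\spec(M)$. In particular, your concern about $\beta$ and $\rho$ coinciding never arises.

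Your route is self-contained and shows directly that $U$ is $M$-invariant, with $M$ acting as scalars on $U_0$ and $W$. However, the paper's route gives this structure for free. Once $K(G)$ equals the bound, both chained inequalities are tight, and their equality cases (\Cref{lem:pinching} and Cauchy--Schwarz) yield $X=0$, $B_0=\beta\II$ and $C=\rho\II$. The one-line evaluation is therefore the more economical choice.
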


\begin{proof}
	Since $\II-B_0\succ0$ we have $\theta_i<1$ for $i\ge2$, so $\tau<m$ by
	\eqref{eq:sum-theta}, and Cauchy--Schwarz gives
	$$\sum_{i=2}^{m}\frac{1}{1-\theta_i}\ \ge\
	\frac{(m-1)^2}{\sum_{i=2}^{m}(1-\theta_i)}=\frac{(m-1)^2}{m-\tau},$$
	with equality if and only if $\theta_2=\cdots=\theta_m$; combining with
	\Cref{thm:partition-pinching} gives \eqref{eq:partition-closed}. Equality forces
	equality in \Cref{thm:partition-pinching} together with
	$\theta_2=\cdots=\theta_m=\frac{\tau-1}{m-1}$, which is the stated spectral
	identity. Conversely, that identity gives directly
	$$K(G)=\frac{m-1}{1-\frac{\tau-1}{m-1}}+\frac{n-m}{1-\rho}
	=\frac{(m-1)^2}{m-\tau}+\frac{(n-m)^2}{n-m+\tau}.$$
\end{proof}
\begin{obs}\label{rem:partition-endpoint}
The restriction $m\le n-1$ avoids the undefined scalar $\rho=-\tau/(n-m)$. For the singleton partition $m=n$, one has $B=M$ and $\tau=0$, so the quotient spectral sum is exactly $K(G)$. Separately, Cauchy--Schwarz and $\sum_{j=1}^{n-1}\mu_j=n$ give
$$K(G)\ge\frac{(n-1)^2}{n}.$$
\end{obs}
\section{Chromatic number, normalized cut, and conductance}

We now apply the partition bounds from \Cref{sec:vertex-partitioning} to
natural graph partitions. Choosing a color partition gives a lower bound on
the chromatic number, while partitions arising from normalized cut and
conductance give bounds in terms of standard measures of how well a graph can
be separated into weakly connected parts.

These applications illustrate how the parameter $\tau(\mathcal P)$ connects
Kemeny's constant with different notions of graph partitioning. In each case,
a suitable partition inserted into \Cref{thm:partition-pinching} or
\Cref{thm:partition-closed} yields a lower bound on $K(G)$. For coloring, we
also characterize the equality cases and compare the resulting bound with
the normalized Hoffman bound.

The same viewpoint will be used throughout this section: a suitable
partition separates the spectral information into a quotient part and its
orthogonal complement, allowing us to relate $K(G)$ to structural parameters
of the graph.

\subsection{Graph coloring}

The bound below holds for weighted graphs; the equality classification is stated for the unweighted case. We apply the partition bound to a proper coloring of the underlying graph. Since each color
class is an independent set, we have $\tau(\mathcal P)=0$. This yields a lower
bound on $K(G)$ in terms of the chromatic number, together with a
characterization of the equality cases.

For connected unweighted bipartite graphs, Ciardo, Dahl and Kirkland proved the following
sharp bound.

\begin{theorem}[Ciardo, Dahl and Kirkland {\cite[Proposition~4.1]{CiDaKi20}}]
\label{thm:CDK}
Let $G$ be a connected unweighted bipartite graph of order $n$. Then $K(G)\ge n-\frac32$,
with equality if and only if $G$ is complete bipartite.
\end{theorem}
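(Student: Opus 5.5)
We derive the bound from \Cref{thm:partition-closed} applied to the bipartition, and then characterize equality using \Cref{thm:partition-pinching}. Let $G$ be connected, unweighted and bipartite with parts $V_1,V_2$, and take $\mathcal P=\{V_1,V_2\}$. Then $m=2$ and $\tau=0$, since both parts are independent. For $n\ge3$ we have $m\le n-1$, so \Cref{thm:partition-closed} gives
\[
K(G)\ge \frac{1}{2}+\frac{(n-2)^2}{n-2}=n-\frac32 .
\]
For $n=2$ the only case is $G=K_2$. There $M$ has spectrum $\{1,-1\}$, so $K(G)=\tfrac12=n-\tfrac32$ directly, and $K_2=K_{1,1}$ is complete bipartite.

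For equality with $n\ge3$, \Cref{thm:partition-closed} says equality holds if and only if $\spec(M)=\{1,-1,0^{(n-2)}\}$. Here $\theta_2=(\tau-1)/(m-1)=-1$ and $\rho=0$; this is consistent with $B$ being the $2\times2$ matrix with zero diagonal, whose eigenvalues are $\pm1$. It therefore suffices to show that this spectrum is equivalent to $G$ being complete bipartite.

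If $G=K_{a,b}$, then $M=D^{-1/2}AD^{-1/2}$ has rank $2$, because $A$ does. Its spectrum is symmetric since $G$ is bipartite, and contains $1$, so it is $\{1,-1,0^{(n-2)}\}$. Conversely, if the spectrum is $\{1,-1,0^{(n-2)}\}$, then $M$, and hence $A$, has rank $2$. A graph whose adjacency matrix has rank $2$ and which has no isolated vertices is a complete bipartite graph, possibly with some vertices duplicated (twins). Connectedness plus bipartiteness then forces $G=K_{a,b}$.

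Spelling out this last implication: vertices in the same part that have different neighbourhoods produce independent rows of $A$. Together with a row from the other part, this gives rank at least $3$, which can be checked on a $3\times 3$ or $4\times4$ principal submatrix (e.g. from an induced $P_4$). So the neighbourhoods within each part all coincide, and connectivity makes them the entire opposite part. This rank-$2$ step is the only non-mechanical part of the argument, and it is elementary.
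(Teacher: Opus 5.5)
Your proof is correct. For the inequality it follows the paper's route exactly. The paper does not reprove this cited result separately; it recovers it as the case $\chi(G)=2$ of \Cref{cor:kemeny-chromatic}. There it applies \Cref{thm:partition-closed} to the color partition with $\tau=0$, uses \Cref{rem:partition-endpoint} for the endpoint $n=2$, and reduces equality to $\spec(M)=\{1,-1,0^{(n-2)}\}$.

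You differ from the paper only in how that spectrum is turned into graph structure. The paper notes that $M$, and hence $A$ by Sylvester's law of inertia, has exactly one positive eigenvalue. It then invokes Smith's theorem to get that $G$ is complete multipartite, and observes that the balancing condition is vacuous when $r=2$.

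You use only the rank, $\mathrm{rank}(A)=\mathrm{rank}(M)=2$. This is more elementary and self-contained for the bipartite case, and it can be tightened to one line. For $A=\begin{pmatrix}0&N\\N^\top&0\end{pmatrix}$ one has $\mathrm{rank}(A)=2\,\mathrm{rank}(N)$. So $N$ is a rank-one $0$--$1$ matrix with no zero rows or columns, hence the all-ones matrix. Your remark about induced $P_4$'s and principal submatrices is therefore unnecessary.

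The price is that your argument is tailored to $r=2$. The paper's inertia-plus-Smith route treats every chromatic number uniformly. For general $r$, the usable information in the equality spectrum is that $A$ has exactly one positive eigenvalue, not its rank.
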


Our partition bound extends this result from bipartite graphs to graphs of
arbitrary chromatic number. Indeed, applying
\Cref{thm:partition-pinching} to a color partition immediately gives the
following bound.

\begin{corollary}    
\label{cor:kemeny-chromatic}
	Let $G$ be a connected weighted graph of order $n\ge2$. Then
	$$K(G)\ \ge\ n-2+\frac{1}{\chi(G)} .$$
	If $G$ is unweighted, equality holds if and only if one of the following holds:
	\begin{itemize}
		\item[(i)] $\chi(G)=2$ and $G$ is complete bipartite;
		\item[(ii)] $\chi(G)=r\ge3$, $r$  divides $n$, and $G\cong T_r(n)$, the
		balanced complete $r$-partite graph.
	\end{itemize}
\end{corollary}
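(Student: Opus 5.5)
We apply \Cref{thm:partition-closed} to the colour partition. Let $r=\chi(G)$ and let $\mathcal P=\{V_1,\dots,V_r\}$ be the colour classes of a proper $r$-colouring of the underlying graph. Each $V_i$ is independent, so $\tau(\mathcal P)=0$ by \Cref{def:weighted-quotient}.

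\emph{Degenerate cases.} If $n=2$, then $G=K_2$, the normalized Laplacian spectrum is $\{0,2\}$, and $K(G)=\tfrac12=n-2+\tfrac12$, with equality (case (i)). If $n\ge3$ and $r=n$, the hypothesis $m\le n-1$ of \Cref{thm:partition-pinching} fails. In that case \Cref{rem:partition-endpoint} gives directly $K(G)\ge (n-1)^2/n=n-2+\frac1n$.

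\emph{Main case $n\ge3$, $2\le r\le n-1$.} With $m=r$ and $\tau=0$, the bound \eqref{eq:partition-closed} reads
$$K(G)\ge \frac{(r-1)^2}{r}+\frac{(n-r)^2}{n-r}=r-2+\frac1r+n-r=n-2+\frac1r,$$
which is the claimed inequality.

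\emph{Equality, including the endpoint $r=n$.} When $r\le n-1$, \Cref{thm:partition-closed} says equality holds if and only if
$$\spec(M)=\{1,(-\tfrac1{r-1})^{(r-1)},0^{(n-r)}\}.$$
When $r=n$, equality in Cauchy--Schwarz forces $\mu_1=\dots=\mu_{n-1}$, which gives the same spectrum.

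I would use the structural form of equality from the proof of \Cref{thm:partition-pinching}, namely $X=0$ and $C=\rho\II=0$. This means $U$ is $M$-invariant and $M$ vanishes on $W=U^\perp$, so $M=\mathsf S B\mathsf S^\top$. Entrywise this reads
$$M_{uv}=B_{ij}\,\frac{\sqrt{k_uk_v}}{\sqrt{\vol(V_i)\vol(V_j)}}\qquad\text{for } u\in V_i,\ v\in V_j.$$
Consequently, whenever $B_{ij}>0$, every $u\in V_i$ is adjacent to every $v\in V_j$.

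Next, $B$ has eigenvector $\mathsf c$ for the eigenvalue $1$, and all its other eigenvalues equal $-1/(r-1)$. Hence
$$B=\tfrac{r}{r-1}\,\mathsf c\mathsf c^\top-\tfrac1{r-1}I.$$
Its off-diagonal entries are positive, so $G$ is complete $r$-partite with parts $V_i$. Its diagonal entries vanish (since $\tau=0$), which gives $c_i^2=1/r$, i.e.\ all parts have equal volume $\vol(G)/r$.

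For $r=2$ no further condition arises, and we obtain case (i). For unweighted $G$ with $r\ge3$, we have $\vol(V_i)=a_i(n-a_i)$ where $a_i=|V_i|$. Equal volumes force each $a_i\in\{a,n-a\}$ for some $a$. If both values occurred, two parts alone would already have total size $n$, contradicting $r\ge3$. So all parts are equal, $r\mid n$, and $G\cong T_r(n)$.

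\emph{Converse.} I would verify the spectral identity directly. For $K_{a,b}$, $M$ has rank $2$ with eigenvalues $\pm1$ (the graph is bipartite), so the spectrum is $\{1,-1,0^{(n-2)}\}$. For $T_r(n)$, $M=\frac{1}{n-n/r}A$ and $A=J-I\otimes J_{n/r}$ has spectrum
$$\{n-\tfrac nr,\ (-\tfrac nr)^{(r-1)},\ 0^{(n-r)}\}.$$
Scaling by $\frac{1}{n-n/r}$ gives exactly the required spectrum of $M$. The case $r=n$ (complete graph) is included here.

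\emph{Main obstacle.} The key step is turning the spectral equality into graph structure. This relies on recovering $X=0$ and $C=0$ from the converse argument in \Cref{thm:partition-pinching}, and then reading adjacency off the rank-$r$ factorization $M=\mathsf S B\mathsf S^\top$.
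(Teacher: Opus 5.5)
Your proof is correct. It departs from the paper only at the step where graph structure is extracted from the equality case.

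\emph{The paper's route.} It works purely with the spectrum. The identity \eqref{eq:spec-equality} shows that $M$ has exactly one positive eigenvalue, and Sylvester's law of inertia transfers this to $A$. Smith's theorem then makes $G$ complete multipartite, and $\chi(G)=r$ forces the parts to be the colour classes.

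\emph{Your route.} You use the matrix-level equality conditions $X=0$ and $C=\rho\II=0$. These come most directly from the fact that equality in \Cref{thm:partition-closed} forces equality in both \eqref{eq:pinch} and \eqref{eq:CS-C}, rather than from the converse argument. Combined with $\u^{\top}M\w=0$ for $\w\in W$, they give $MW=0$, hence $M=\mathsf S B\mathsf S^{\top}$. You then read adjacency off the positive off-diagonal entries of $B=\frac{r}{r-1}\mathsf c\mathsf c^{\top}-\frac{1}{r-1}\II$. From there the two arguments coincide: the zero diagonal of $B$ gives equal part volumes, and the same counting gives balanced parts.

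\emph{What each buys.} Your approach is self-contained, needing neither Smith's theorem nor inertia. It treats the endpoint $r=n$ uniformly, since there $W=\{0\}$ and $\mathsf S=I$. It also works verbatim for weighted graphs. There it shows that equality forces the underlying graph to be complete $r$-partite with equal-volume colour classes. It even pins down the edge weights between distinct classes as $\frac{r\,k_uk_v}{(r-1)\vol(G)}$, a weighted equality characterization the paper does not state. The paper's route is shorter once Smith's theorem is available, and needs only the spectrum, not the block structure.
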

\begin{proof}
Let $r=\chi(G)$ and let $\mathcal P=\{V_1,\dots,V_r\}$ be a color partition of
the underlying graph. If $r=n$,  then \Cref{rem:partition-endpoint} gives $K(G)\ge(n-1)^2/n=n-2+1/n$. If $G$ is unweighted, then $G=K_n=T_n(n)$ and
equality is included in case~(ii). We may therefore assume 
$r\le n-1$. Every part is independent, so
$B_{ii}=0$ for all $i$ and hence $\tau=\tr B=0$. Now
\Cref{thm:partition-closed} gives
$$K(G)\ \ge\ \frac{(r-1)^2}{r}+\frac{(n-r)^2}{n-r}=n-2+\frac1r,$$
and, since $\rho=-\tau/(n-r)=0$ and $\frac{\tau-1}{r-1}=-\frac{1}{r-1}$, equality
holds if and only if
\begin{equation}\label{eq:spec-equality}
\spec (M)=\Big\{1,\ \big(-\tfrac{1}{r-1}\big)^{(r-1)},\ 0^{(n-r)}\Big\}.
\end{equation}
For the equality classification, assume now that $G$ is unweighted and that equality holds. Then \eqref{eq:spec-equality} holds, so
$M$ has exactly one positive eigenvalue,
and since $A=D^{1/2}MD^{1/2}$ with $D^{1/2}$ invertible, the matrices $A$ and $M$
are congruent, so by Sylvester's law of inertia $A$ has exactly one positive
eigenvalue as well. By Smith's theorem \cite{Smith1970}, $G$ is complete
multipartite. Its number of parts is $\chi(G)=r$, and as $\mathcal P$ consists of
$r$ independent sets it must be the partition into those parts; write
$G\cong K_{n_1,\dots,n_r}$ with $V_i$ the part of size $n_i$.

It remains to show that the parts are balanced. Equality in
\Cref{thm:partition-closed} implies equality in
\Cref{thm:partition-pinching}, so
$\spec (B)\uplus\{0^{(n-r)}\}=\spec (M)$; cancelling
$\{0^{(n-r)}\}$ in \eqref{eq:spec-equality} gives
$\spec (B)=\{1,(-\tfrac{1}{r-1})^{(r-1)}\}$.
By the proof
of \Cref{thm:partition-pinching}, $1$ is the largest eigenvalue of $B$, with unit
positive eigenvector $\mathsf c$.
Then $B+\tfrac{1}{r-1}\II$ is positive semidefinite of rank one with trace
$\tfrac{r}{r-1}$, so
$$B+\frac{1}{r-1}\II
=\frac{r}{r-1}\mathsf{c}\mathsf{c}^{\top}.$$
Reading the diagonal, $B_{ii}=0$ forces $\mathsf c_i^{2}=\frac1r$, that is
$\vol (V_i)=\vol (G)/r$ for every $i$. Since
$\vol (V_i)=n_i(n-n_i)$, all the $n_i$ satisfy one quadratic, so
$n_i\in\{a,n-a\}$ for some $1\le a\le n/2$. If $r\ge3$ and some part had size
$n-a$, the remaining $r-1\ge2$ parts would have sizes summing to $a$ while each
is at least $a$, which is impossible; hence $n_1=\cdots=n_r=n/r$, so $r\mid n$
and $G\cong T_r(n)$. For $r=2$ the condition $n_1n_2=n_2n_1$ is vacuous and $G$
is an arbitrary complete bipartite graph.

Conversely, $\spec (M(K_{p,q}))=\{1,0^{(n-2)},-1\}$ gives
$K=n-\frac32$, and
$\spec (M(T_r(n)))=\{1,(-\tfrac{1}{r-1})^{(r-1)},0^{(n-r)}\}$ gives
$K=\tfrac{(r-1)^2}{r}+(n-r)=n-2+\tfrac1r$; both are instances of
\eqref{eq:spec-equality}.
\end{proof}

\subsubsection{Comparison with the normalized Hoffman bound}

For the remainder of this subsection, $G$ is unweighted.
By \Cref{cor:kemeny-chromatic}, Kemeny's constant gives the following lower bound for the chromatic number of a graph

\begin{corollary}[Kemeny's bound on the chromatic number]
$$\chi(G)\ge \frac{1}{K(G)-n+2}\eqqcolon\beta_K(G).$$
\end{corollary}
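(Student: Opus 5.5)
This is a direct rearrangement of \Cref{cor:kemeny-chromatic}. That corollary gives
\[
K(G)\ge n-2+\frac{1}{\chi(G)},
\]
so $K(G)-n+2\ge 1/\chi(G)>0$. The plan is therefore just to invert this inequality. The only point needing care is the sign of the denominator in $\beta_K(G)$, which is exactly what the corollary supplies.

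\textbf{Step 1 (positivity of the denominator).} Since $\chi(G)\ge1$ is a positive integer, the corollary gives $K(G)-(n-2)\ge 1/\chi(G)>0$. This holds for every connected graph of order $n\ge2$. Hence $\beta_K(G)=1/(K(G)-n+2)$ is well defined and positive.

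\textbf{Step 2 (inversion).} Both sides of $K(G)-n+2\ge 1/\chi(G)$ are positive, and $t\mapsto 1/t$ is decreasing on $(0,\infty)$. Taking reciprocals reverses the inequality and gives
\[
\frac{1}{K(G)-n+2}\le \chi(G),
\]
which is the claim $\chi(G)\ge\beta_K(G)$.

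There is no real obstacle here; the positivity check in Step 1 is the only thing that must be stated explicitly. I would also note the equality cases. Equality $\chi(G)=\beta_K(G)$ holds exactly when equality holds in \Cref{cor:kemeny-chromatic}. For unweighted $G$ this means $G$ is complete bipartite or $G\cong T_r(n)$ with $r\mid n$, so the bound is sharp. The same argument applies verbatim to weighted graphs, with $\chi$ taken for the underlying graph.
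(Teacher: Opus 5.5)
Your proposal is correct and matches the paper, which obtains this corollary by inverting the inequality $K(G)\ge n-2+1/\chi(G)$ of Corollary~\ref{cor:kemeny-chromatic}. Your explicit check that $K(G)-n+2\ge 1/\chi(G)>0$ is exactly what makes taking reciprocals legitimate.
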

    
	Since $\chi(G)$ is an integer, the corresponding integer-valued bound is
	$$\chi(G)\ge\left\lceil\frac{1}{K(G)-n+2}\right\rceil.$$	
	Next we compare our with the normalized-Laplacian version of Hoffman's bound, which was shown by 
	 Chung \cite{Chung1997} using the second largest eigenvalue of the normalized Laplacian, $\mu_1$.

\begin{theorem}[Normalized Hoffman bound on the chromatic number {\cite[Theorem~6.7]{Chung1997}}]
Let \(G\) be a graph with no isolated vertices. Let
$2 \geq \mu_1 \geq \cdots \geq \mu_n = 0$ be the eigenvalues of its normalized Laplacian, and let
$1=\lambda_1 \geq \cdots \geq \lambda_n \geq -1$ be the eigenvalues of its normalized adjacency matrix. Then
\[
    \chi(G)
    \geq
    \frac{\mu_1}{\mu_1-1}
    =
    1-\frac{1}{\lambda_n}.
\]
\end{theorem}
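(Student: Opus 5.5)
Our plan is to prove the Hoffman-type bound directly. Let $r=\chi(G)$ and fix a proper coloring $\mathcal P=\{V_1,\dots,V_r\}$ of the underlying graph. Since $\lambda_n\ge -1$ and $\lambda_n<0$ (the trace of $M$ is zero and $\lambda_1=1$), the quantity $1-1/\lambda_n$ is well defined. The identity $\mu_1/(\mu_1-1)=1-1/\lambda_n$ follows from $\mu_1=1-\lambda_n$. It therefore suffices to show that $\lambda_n\le -\frac{1}{r-1}$, which rearranges to $r\ge 1-1/\lambda_n$.

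We will use the degree-weighted quotient matrix $B=B(\mathcal P)$ of Definition~\ref{def:weighted-quotient}. Because every part is independent, $B_{ii}=0$, so $\tr B=\tau=0$. By the proof of \Cref{thm:partition-pinching}, $B$ has simple largest eigenvalue $\theta_1=1$. Then $\theta_2+\cdots+\theta_r=-1$, so the smallest eigenvalue satisfies $\theta_r\le -\frac{1}{r-1}$.

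We next transfer this bound to $M$. The matrix $B={\sf S}^\top M{\sf S}$ is a compression of $M$ by a matrix ${\sf S}$ with orthonormal columns. By the Rayleigh quotient (or Cauchy/compression interlacing),
\[
\lambda_n=\min_{\|\x\|=1}\x^\top M\x\le \min_{\|\y\|=1}({\sf S}\y)^\top M({\sf S}\y)=\theta_r.
\]
Hence $\lambda_n\le\theta_r\le-\frac1{r-1}$. Since $\lambda_n<0$, the inequality rearranges to $r-1\ge -1/\lambda_n$, that is, $\chi(G)\ge 1-1/\lambda_n=\mu_1/(\mu_1-1)$.

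The argument also works for weighted graphs. The case $r=1$ cannot occur, since $G$ has no isolated vertices and so contains an edge. If $G$ is disconnected, $\theta_1=1$ need not be simple, but we still have $\theta_1\le1$, because $M$ has largest eigenvalue $1$ and a compression cannot exceed it. The trace argument then gives $\sum_{i\ge2}\theta_i=-\theta_1\le\dots$, which is the wrong direction. The fix is to take $\theta_1\ge \mathsf c^\top B\mathsf c=1$ with the vector $\mathsf c$ from \eqref{eq:u-in-U}; combined with $\theta_1\le1$ this gives $\theta_1=1$ regardless of connectivity. This disconnected case is the only mildly delicate point. Alternatively, one can apply the connected argument to each component, since $\chi$ is the maximum over components and $\lambda_n$ is the minimum.
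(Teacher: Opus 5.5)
Your proof is correct. The paper gives no proof of this statement at all; it is quoted from Chung \cite[Theorem~6.7]{Chung1997}. So your argument adds something rather than reconstructing the paper's.

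What you wrote is the standard interlacing proof of Hoffman's bound (see e.g.\ \cite{H1995}), transplanted to $M$. The vectors ${\sf s}_i$ of Definition~\ref{def:weighted-quotient} are exactly the normalized restrictions of the eigenvector $D^{1/2}\mathsf{1}$ of $M$ to the color classes. Hence $B(\mathcal P)$ has zero diagonal and top eigenvalue $1$. This gives $\theta_2+\cdots+\theta_r=-1$, then $\theta_r\le-1/(r-1)$, and finally $\lambda_n\le\theta_r$.

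Phrasing it this way inside the paper has a real advantage. Both chromatic bounds of the section then come from the same compression with $\tau=0$ and the same trace identity:
\begin{itemize}
\item the normalized Hoffman bound uses only the extreme eigenvalue $\theta_r$;
\item Corollary~\ref{cor:kemeny-chromatic} averages $1/(1-\theta_i)$ over all $i\ge2$ via Cauchy--Schwarz and adds the pinched complement block.
\end{itemize}
This makes two things transparent: the incomparability examples, and the simultaneous sharpness on $T_r(n)$, where $\theta_2=\cdots=\theta_r=\lambda_n=-1/(r-1)$. Your version also holds verbatim for weighted graphs.

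Two small clean-ups.

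First, ${\sf s}_i$ and $\mathsf c$ are well defined because $\vol(V_i)>0$. Here this follows from the absence of isolated vertices, not from connectivity; you should say so explicitly.

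Second, the ``delicate point'' in your last paragraph is not really there. Simplicity of $\theta_1$ is never used. The two facts you need both hold for every graph without isolated vertices:
\begin{itemize}
\item $B\mathsf c=\mathsf c$, which only uses $M\mathsf u=\mathsf u$ and $\mathsf u=\sum_i c_i{\sf s}_i$;
\item $\mathsf y^\top B\mathsf y\le\|\mathsf y\|^2$.
\end{itemize}
So drop the appeal to the proof of Theorem~\ref{thm:partition-pinching}. Its hypotheses (connectedness, $n\ge3$, $2\le m\le n-1$) need not hold here; for instance $m=r=n$ when $G=K_n$. With that, the component-wise fallback is also unnecessary.
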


	Following Elphick and Wocjan notation
	\cite{ElphickWocjan2015}, we refer to this inequality as the
	\emph{normalized Hoffman bound}. 
	In general, neither $\beta_K(G)$ nor $\beta_H(G)$ dominates the other.
	
	We first give a family for which our Kemeny bound is stronger. For
	$n\ge4$ and $2\le t\le n-2$, let
	$$	G_{n,t}=\overline{K_t}\vee K_{n-t}
	=K_{t,1,\ldots,1}.$$
	Thus, $G_{n,t}$ is obtained from $K_n$ by deleting all the edges of a
	fixed copy of $K_t$, and
	$\chi(G_{n,t})=n-t+1$. The vertices in $\overline{K_t}$ have degree
	$n-t$, while those in $K_{n-t}$ have degree $n-1$.
	
	The vectors supported on $\overline{K_t}$ whose coordinates sum to zero
	give the normalized adjacency eigenvalue $0$ with multiplicity $t-1$.
	Similarly, the vectors supported on $K_{n-t}$ whose coordinates sum to
	zero give the eigenvalue $-1/(n-1)$ with multiplicity $n-t-1$. On the
	subspace of vectors that are constant on each of the two parts, the
	normalized adjacency matrix has compression
	$$
	\begin{pmatrix}
		0&\sqrt{\dfrac{t}{n-1}}\\[2mm]
		\sqrt{\dfrac{t}{n-1}}&
		\dfrac{n-t-1}{n-1}
	\end{pmatrix},
	$$
	whose eigenvalues are $1$ and $-t/(n-1)$. Consequently,
	$$	\spec(M(G_{n,t}))
	=\left\{
	1,-\frac{t}{n-1},
	\left(-\frac{1}{n-1}\right)^{(n-t-1)},
	0^{(t-1)}
	\right\}.	$$
	It follows that
	$$	\beta_K(G_{n,t})=	\frac{n(n+t-1)}{n+t^2-1}.	$$
	On the other hand, since $\lambda_n=-t/(n-1)$, the normalized Hoffman
	bound gives
	$$	\beta_H(G_{n,t})	=	\frac{n+t-1}{t}.	$$
	A direct calculation shows that
	\begin{equation}\label{eq:bet_K-beta_H}
    \beta_K(G_{n,t})-\beta_H(G_{n,t})	=
	\frac{(t-1)(n-t-1)(n+t-1)}	{t(n+t^2-1)}>0.
    \end{equation}
	Therefore, the Kemeny bound is strictly stronger than the normalized
	Hoffman bound on the entire family $G_{n,t}$, for
	$2\le t\le n-2$. The difference \eqref{eq:bet_K-beta_H} even tends to $\infty$ as $n$ grows provided that $t=o(n)$.	
	In particular, when $t=2$,
	$$\left\lceil\beta_K(G_{n,2})\right\rceil =\left\lceil\frac{n(n+1)}{n+3}\right\rceil =n-1	=\chi(G_{n,2}),	\quad
	\text{whereas}\quad\beta_H(G_{n,2})=\frac{n+1}{2}.$$
	
For the opposite comparison, consider the generalized friendship graph
$$F_{p,q}=K_1\vee(pK_q),\qquad p,q\ge2.$$
Thus, $F_{p,q}$ is obtained by joining one vertex to all vertices of
$p$ disjoint copies of $K_q$. It has order $n=pq+1$ and chromatic
number $\chi(F_{p,q})=q+1.$

The  normalized adjacency spectrum of $F_{p,q}$ is
\cite{BermanEtAl2018}
$$\spec(M(F_{p,q}))
=\left\{1,\left(\frac{q-1}{q}\right)^{(p-1)},
\left(-\frac1q\right)^{(pq-p+1)}
\right\}.$$
This spectrum can also be verified directly. Vectors whose coordinates
sum to zero within one of the copies of $K_q$ give the eigenvalue
$-1/q$, differences between vectors that are constant on the copies of
$K_q$ give the eigenvalue $(q-1)/q$, and the remaining two-dimensional
subspace gives the eigenvalues $1$ and $-1/q$.

Since the largest normalized Laplacian eigenvalue is
$\mu_1=1+1/q$, the normalized Hoffman bound gives
$$\beta_H(F_{p,q})=\frac{\mu_1}{\mu_1-1}=q+1=\chi(F_{p,q}).$$
On the other hand,
$$K(F_{p,q})=q(p-1)+\frac{q}{q+1}(pq-p+1).$$
Since $n=pq+1$, it follows that
$$\beta_K(F_{p,q})=\frac{q+1}{1+(p-1)q(q-1)}.$$
For $p,q\ge2$,
$$\beta_K(F_{p,q})\le1<q+1=\beta_H(F_{p,q}),$$
where equality in the first inequality occurs only when $p=q=2$.
Consequently, on generalized friendship graphs, the normalized Hoffman
bound determines the chromatic number exactly, whereas the Kemeny bound
is trivial.

These two families show that the Kemeny bound and the normalized Hoffman
bound are genuinely incomparable. They nevertheless coincide, and are
both exact, for balanced complete multipartite graphs.

\subsection{The normalized cut}\label{sec:ncut}

We next consider the normalized cut of a vertex partition. In this setting,
$\tau(\mathcal P)$ is directly related to the normalized cut, which is a
standard objective function in spectral clustering. Thus,
\Cref{thm:partition-closed} gives a bound on $K(G)$ in terms of the normalized
cut of the partition.

Let $\mathcal P=\{V_1,\dots,V_m\}$ be a partition of $V(G)$ into nonempty parts
and let
$$\ncut(\mathcal P) \coloneqq\ \sum_{i=1}^{m}\frac{\cut(V_i,\bar V_i)}{\vol(V_i)}$$
be its \emph{normalized cut}. 
The normalized cut is the objective introduced by Shi and Malik
\cite{ShiMalik2000} for image segmentation and is the standard multiway
partition functional of spectral clustering, see von Luxburg
\cite{vonLuxburg2007} for a survey. The quantity $\operatorname{cut}(V_i,\bar V_i)/\operatorname{vol}(V_i)$ is the
probability that the stationary random walk on $G$, conditioned to be in $V_i$,
leaves $V_i$ in one step, so $\operatorname{NCut}(\mathcal P)$ is the total
one-step escape rate of the partition. For a random-walk parameter such as
$K(G)$ this, rather than the cut normalized by cardinality, is the natural
quantity.

The bounds we obtain in this and the next subsection are defined in terms of the following function:
\[F_m(x)\coloneqq\frac{(m-1)^2}{x}+\frac{(n-m)^2}{n-x},\qquad 2\le m\le n-1,\quad 0<x<n.\]

\begin{proposition}\label{prop:ncut}
 We have
\begin{equation}\label{eq:F-form}
K(G) \ge F_m\big(\ncut(\mathcal P)\big).
\end{equation}
\end{proposition}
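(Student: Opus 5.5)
Our plan is to rewrite the closed-form bound of \Cref{thm:partition-closed} in terms of the normalized cut. The key identity is $\tau(\mathcal P)=m-\ncut(\mathcal P)$. Once this is established, $F_m(\ncut(\mathcal P))$ is exactly the right-hand side of \eqref{eq:partition-closed}. We assume, as in the theorems of \Cref{sec:vertex-partitioning}, that $n\ge 3$ and $2\le m\le n-1$; $G$ is connected, so every $\vol(V_i)>0$.

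First we verify the identity. For each part $V_i$, every unit of degree of a vertex in $V_i$ comes from an edge that either stays inside $V_i$ or leaves it. Hence
$$\vol(V_i)=2e(V_i)+\cut(V_i,\bar V_i).$$
Dividing by $\vol(V_i)$ gives $B_{ii}=2e(V_i)/\vol(V_i)=1-\cut(V_i,\bar V_i)/\vol(V_i)$. Summing over $i=1,\dots,m$ yields $\tau=m-\ncut(\mathcal P)$.

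Next we substitute $\tau=m-x$ with $x=\ncut(\mathcal P)$ into \eqref{eq:partition-closed}. The denominators become
$$m-\tau=x,\qquad n-m+\tau=n-x,$$
so the right-hand side is $\frac{(m-1)^2}{x}+\frac{(n-m)^2}{n-x}=F_m(x)$.

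It remains to check that $x$ lies in the domain $0<x<n$ of $F_m$. The proof of \Cref{thm:partition-closed} shows $\tau<m$, which gives $x>0$. Directly, connectivity with $m\ge2$ forces each $\cut(V_i,\bar V_i)>0$. For the upper end, $\tau\ge0$ gives $x\le m\le n-1<n$.

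There is no real obstacle here. The only care needed is the volume decomposition for weighted graphs, where each internal edge weight is counted twice in $\vol(V_i)$ and each crossing edge weight once. We will also note that the equality cases are inherited from \Cref{thm:partition-closed} with $\tau=m-\ncut(\mathcal P)$.
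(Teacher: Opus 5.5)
Your proof is correct and matches the paper's: you derive $\tau(\mathcal P)=m-\ncut(\mathcal P)$ from $\vol(V_i)=2e(V_i)+\cut(V_i,\bar V_i)$, check that $0<\ncut(\mathcal P)\le m<n$, and substitute into \eqref{eq:partition-closed}. The only difference is cosmetic: the paper writes the per-part identity as ${\sf s}_i^{\top}\mathcal L_G{\sf s}_i=1-B_{ii}$.
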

\begin{proof}
Let ${\sf S}=({\sf s}_1\ \cdots\ {\sf s}_m)$.
We first verify the identity $\ncut(\mathcal P)=m-\tau(\mathcal P)$.
Since $\mathcal L_G=I-M$ and $\|{\sf s}_i\|=1$,
$${\sf s}_i^{\top}\mathcal L_G{\sf s}_i=1-B_{ii}
=1-\frac{2e(V_i)}{\vol (V_i)}
=\frac{\cut (V_i,\bar V_i)}{\vol (V_i)},$$
using $\vol (V_i)=2e(V_i)+\cut (V_i,\bar V_i)$.
Summing over $i$ gives
$\tr ({\sf S}^{\top}\mathcal L_G{\sf S})=m-\tr B
=m-\tau$. Each summand lies in $(0,1]$,
because $G$ is connected and each part is nonempty and proper. Hence $0<\ncut(\mathcal P)\le m<n$.
Substituting $m-\tau=\ncut $ and
$n-m+\tau=n-\ncut $ into \eqref{eq:partition-closed} gives
\eqref{eq:F-form}.
\end{proof}

\subsection{Conductance}\label{sec:conductance-cor}
Finally, we apply the same bound to partitions whose parts have small
conductance. This gives a lower bound on $K(G)$ in terms of the conductance
of the partition, and hence in terms of the conductance of the graph.

 Recall that the \emph{conductance} of a graph $G$ is defined as
$$\phi(G) = \min_{\emptyset\ne S \subsetneq V} \frac{\sum_{u \in S, v \in \bar{S}} c(u,v)}
{\min\bigl({\sf vol}(S),{\sf vol}(\bar{S})\bigr)},$$
and recall the well-known  \emph{Cheeger's inequality} on $\phi(G)$ \cite{Chung1997}:
$$\frac{\phi(G)^2}{2} \leq \mu_{n-1} \leq 2\phi(G).$$

\begin{lemma}\label{prop:two-regimes}
Let $2\le m\le n-1$. Then $F_m$ is strictly convex on $(0,n)$, strictly
decreasing on $(0,x^{*}]$ and strictly increasing on $[x^{*},m]$, where
$$x^{*}=\frac{n(m-1)}{n-1},\qquad F_m(x^{*})=\frac{(n-1)^2}{n}.$$
Moreover $F_m(m)=n-2+\frac1m$.
\end{lemma}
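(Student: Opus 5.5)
The plan is a direct one-variable calculus argument on $F_m(x)=\frac{(m-1)^2}{x}+\frac{(n-m)^2}{n-x}$ over $(0,n)$. Set $a=m-1>0$ and $b=n-m>0$, which are positive because $2\le m\le n-1$. Then $a+b=n-1$ and $F_m(x)=a^2/x+b^2/(n-x)$.

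\emph{Strict convexity.} Differentiate twice:
\[
F_m''(x)=\frac{2a^2}{x^3}+\frac{2b^2}{(n-x)^3}.
\]
Both terms are strictly positive on $(0,n)$, so $F_m$ is strictly convex there.

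\emph{Critical point.} Since
\[
F_m'(x)=-\frac{a^2}{x^2}+\frac{b^2}{(n-x)^2},
\]
the equation $F_m'(x)=0$ means $b^2x^2=a^2(n-x)^2$. Taking positive square roots on $(0,n)$ gives $bx=a(n-x)$, so
\[
x^*=\frac{an}{a+b}=\frac{n(m-1)}{n-1}.
\]
By strict convexity, $F_m'$ is strictly increasing, so it is negative on $(0,x^*)$ and positive on $(x^*,n)$. Hence $F_m$ is strictly decreasing on $(0,x^*]$ and strictly increasing on $[x^*,n)$, in particular on $[x^*,m]$.

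That interval is nonempty because $x^*\le m$. Indeed, $x^*\le m$ is equivalent to $n(m-1)\le m(n-1)$, i.e. to $m\le n$, which holds. Also $m<n$, so $m$ lies in the domain.

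\emph{Minimum value.} We have $x^*=an/(n-1)$ and $n-x^*=bn/(n-1)$. Therefore
\[
F_m(x^*)=\frac{a(n-1)}{n}+\frac{b(n-1)}{n}=\frac{(a+b)(n-1)}{n}=\frac{(n-1)^2}{n}.
\]

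\emph{Value at $m$.} Direct substitution gives
\[
F_m(m)=\frac{(m-1)^2}{m}+\frac{(n-m)^2}{n-m}=m-2+\frac1m+n-m=n-2+\frac1m,
\]
which is the same computation as in \Cref{cor:kemeny-chromatic}.

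There is no real obstacle here. The only point needing care is that the square-root step in solving $F_m'(x)=0$ is legitimate, which holds because $x$ and $n-x$ are both positive on $(0,n)$. It is also worth noting that $x^*$ is exactly the Cauchy–Schwarz equality point, consistent with \Cref{rem:partition-endpoint}.
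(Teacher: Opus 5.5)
Your proof is correct and follows essentially the same route as the paper: compute $F_m'$ and $F_m''$, solve $F_m'(x)=0$ to get $x^{*}$, use $F_m''>0$ for strict convexity and the sign change of $F_m'$, check $x^{*}<m$ via $n(m-1)<m(n-1)$, and substitute $x^{*}$ and $m$ directly. Your justification of the square-root step and of the sign of $F_m'$ on each side of $x^{*}$ makes explicit what the paper leaves implicit.
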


\begin{proof}
$F_m'(x)=-\frac{(m-1)^2}{x^2}+\frac{(n-m)^2}{(n-x)^2}$ vanishes exactly when
$x(n-m)=(m-1)(n-x)$, i.e.\ $x=x^{*}$, and $F_m''>0$ on $(0,n)$. Substituting
$x^{*}$ and $n-x^{*}=\frac{n(n-m)}{n-1}$ gives
$$F_m(x^{*})=\frac{(n-1)^2}{n},\quad \text{and}\quad
F_m(m)=n-2+\frac1m.$$ Finally $x^{*}<m$ because
$n(m-1)<m(n-1)$ for $m<n$.
\end{proof}

\begin{corollary}\label{cor:clustering}
Let $G$ be a connected graph of order $n\ge3$, let $2\le m\le n-1$, and suppose $V(G)$ admits a partition into $m$ nonempty parts such that
$\cut (V_i,\bar V_i)\le\phi\vol(V_i)$ for $i=1,\dots,m$. Then
$$K(G)\ \ge\ \frac{(m-1)^2}{m\phi}+\frac{(n-m)^2}{n}.$$
\end{corollary}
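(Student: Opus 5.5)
The plan is to apply \Cref{prop:ncut} to the given partition and then bound each of the two terms of $F_m$ separately, rather than appealing to the monotonicity in \Cref{prop:two-regimes}. That lemma only gives monotone decrease on $(0,x^{*}]$, and we have no guarantee that $\ncut(\mathcal P)\le x^{*}$.

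Write $x=\ncut(\mathcal P)$. By \Cref{prop:ncut},
$$K(G)\ \ge\ F_m(x)=\frac{(m-1)^2}{x}+\frac{(n-m)^2}{n-x},$$
and the proof of that proposition shows $0<x\le m<n$.

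Summing the hypotheses $\cut(V_i,\bar V_i)/\vol(V_i)\le\phi$ over $i=1,\dots,m$ gives $x\le m\phi$. Since $x>0$, this also forces $\phi>0$, so the target expression is well defined. Because $0<x\le m\phi$, we get
$$\frac{(m-1)^2}{x}\ \ge\ \frac{(m-1)^2}{m\phi}.$$

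For the second term, $x>0$ gives $0<n-x<n$, hence
$$\frac{(n-m)^2}{n-x}\ \ge\ \frac{(n-m)^2}{n}.$$
Adding the two bounds yields the claimed inequality.

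There is no real obstacle. The only point needing care is to avoid using monotonicity of $F_m$ as a whole. Bounding the two summands independently sidesteps this, at the cost of replacing $n-x$ by $n$ in the second term, which accounts for the form $(n-m)^2/n$ in the statement.
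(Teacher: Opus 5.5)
Your proof is correct and is essentially the paper's argument: apply \Cref{prop:ncut} with $x=\ncut(\mathcal P)\in(0,m\phi]$ and bound the two summands of $F_m(x)$ separately, with no appeal to monotonicity of $F_m$. The paper also records that your last step is strict, since $n-m\ge1$ and $x>0$ give $(n-m)^2/(n-x)>(n-m)^2/n$; this strictness is what later shows that equality cannot occur in \Cref{cor:conductance}.
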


\begin{proof}
Put $x=\ncut(\mathcal P)$. Then $0<x\le m\phi$, and \Cref{prop:ncut} gives
$$K(G)\ge\frac{(m-1)^2}{x}+\frac{(n-m)^2}{n-x}
\ge\frac{(m-1)^2}{m\phi}+\frac{(n-m)^2}{n-x}
>\frac{(m-1)^2}{m\phi}+\frac{(n-m)^2}{n}.$$
\end{proof}

\begin{corollary}\label{cor:conductance}
Let $G$ be a connected graph of order $n\ge3$. Then
$$K(G) \ge \frac{1}{2\phi(G)}+\frac{(n-2)^2}{n}.$$
In fact, equality cannot occur.
\end{corollary}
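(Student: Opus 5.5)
We will obtain the bound from \Cref{cor:clustering} with $m=2$. Choose a set $S$ with $\emptyset\ne S\subsetneq V$ attaining the minimum in the definition of $\phi(G)$, and write $\phi=\phi(G)$. Without loss of generality assume $\vol(S)\le\vol(\bar S)$, so that
$$\cut(S,\bar S)=\phi\,\vol(S)\le\phi\,\vol(\bar S).$$
Thus the partition $\mathcal P=\{S,\bar S\}$ satisfies the hypothesis $\cut(V_i,\bar V_i)\le\phi\vol(V_i)$ for both parts. Since $n\ge3$, we have $2\le m=2\le n-1$. Also $\phi>0$ because $G$ is connected, so every cut of a proper nonempty set has positive weight.

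Applying \Cref{cor:clustering} with $m=2$ gives
$$K(G)\ge\frac{1}{2\phi}+\frac{(n-2)^2}{n}.$$
The corollary's proof actually produces a strict inequality in its last step. There $x=\ncut(\mathcal P)>0$, so $\frac{(n-m)^2}{n-x}>\frac{(n-m)^2}{n}$, using $n-m>0$ and $0<x<n$. This strictness gives the claim that equality cannot occur. To keep the argument self-contained, we will redo that two-line chain explicitly for $m=2$:
$$K(G)\ge F_2(x)=\frac1x+\frac{(n-2)^2}{n-x},\qquad 0<x\le 2\phi.$$
Here $\frac1x\ge\frac1{2\phi}$ and $\frac{(n-2)^2}{n-x}>\frac{(n-2)^2}{n}$, and $0<x<n$ holds by the proof of \Cref{prop:ncut}.

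There is no real obstacle. The only points requiring care are the following.
\begin{itemize}
\item The minimizing $S$ must satisfy the condition for both parts, not just for the smaller-volume side. This is handled by orienting $S$ as the side of smaller volume.
\item The degenerate possibility $x=n$ has to be excluded, so that the strict inequality is legitimate. This follows from $\ncut\le m=2<3\le n$.
\end{itemize}
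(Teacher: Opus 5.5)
Your proposal is correct and matches the paper's proof: choose a set attaining $\phi(G)$, observe that both ratios $\cut(S,\bar S)/\vol(S)$ and $\cut(S,\bar S)/\vol(\bar S)$ are at most $\phi(G)$, apply \Cref{cor:clustering} with $m=2$, and inherit the impossibility of equality from the strict final step of that corollary's proof. Your extra checks ($\phi(G)>0$ and $0<\ncut(\mathcal P)\le 2<n$) are harmless and correct.
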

\begin{proof}
Choose a nonempty proper set $S$ attaining $\phi(G)$ and put $b=\cut(S,\bar S)$. Both $b/\vol(S)$ and $b/\vol(\bar S)$ are at most $\phi(G)$. Applying \Cref{cor:clustering} with $m=2$ gives the stated bound, and the strict inequality in its proof shows that equality cannot occur.
\end{proof}

The bound is asymptotically sharp, as illustrated by the following example.
\begin{example}
Let $\mathcal B_s$ consist of two disjoint copies of $K_s$, joined by one edge between designated vertices, where $s\ge2$. Set $v_s=s(s-1)+1$. Each clique has volume $v_s$, and the cut consisting of the joining edge has conductance $1/v_s$. Every nontrivial cut has at least one edge and smaller-side volume at most $v_s$, so
$$\phi(\mathcal B_s)=\frac{1}{v_s}.$$
Exact formulas for Kemeny's constant of barbell graphs were obtained in \cite{Breen2019KemenyBarbell}. A simpler formula for the family $B(1,a,b,c)$ was later given in \cite[Theorem~3.4]{FKK2022}. In the notation of \cite{FKK2022}, $\mathcal B_s=B(1,2,s-1,s-1)$, and hence their formula gives
\begin{equation}\label{eq:barbell-exact}
K(\mathcal B_s)=\frac{2(s-2)(s-1)}s+\frac{s(s-1)}{v_s}+\frac{s^2+s-1}{2}.
\end{equation}
The lower bound in \Cref{cor:conductance} is
$$L_s=\frac{v_s}{2}+\frac{(2s-2)^2}{2s},$$
and subtraction gives the exact positive gap
$$K(\mathcal B_s)-L_s=\frac{(s-1)^2(s^2-s+2)}{s(s^2-s+1)}.$$
Both $K(\mathcal B_s)$ and $L_s$ are asymptotic to $s^2/2$, so $L_s/K(\mathcal B_s)\to1$. In particular the coefficient $1/2$ of $1/\phi(G)$ cannot be increased uniformly, even for unweighted graphs.
\end{example}

\section{Interlacing}\label{sec:interlacing}

In this section we consider unweighted graphs,  and we focus on investigating how structural properties - particularly vertex and edge removal and graph clustering - affect Kemeny's constant. Several studies have explored the relationship between graph structure and Kemeny's constant, see e.g. \cite{ABCMP2023,BCK2022,KLMZ2024,FKK2022}. 

In order to derive our main results we will use another fundamental technique from spectral graph theory: eigenvalue interlacing. We will be using both vertex and edge interlacing techniques.

Consider two sequences of real numbers: $\lambda_1 \geq\cdots \geq\lambda_{n}$ and $\theta_1\geq \cdots \geq \theta_{m}$ with $m<n$. The second sequence is said to \emph{interlace} the first one whenever
\begin{equation*}
\lambda_i\geq \theta_i \geq \lambda_{n-m+i} \quad \text{for }i=1,\ldots,m.
\end{equation*}

If $m=n-1$, the interlacing inequalities become $\lambda_1\geq \theta_1 \geq \lambda_2 \geq \theta_2\geq \cdots \geq \theta_m \geq \lambda_n$, which clarifies the term ``interlacing". Throughout, the $\lambda_i$ and the $\theta_i$ represent the eigenvalues of matrices, $A$ and $B$, respectively.

\begin{theorem}[Interlacing Theorem, see \cite{H1995}]\label{thm:interlacing}
Let $A$ be a real symmetric $n\times n$  matrix with
eigenvalues $\lambda_1\ge\cdots\ge \lambda_n$. For some $m<n$,
let $S$ be a real $n\times m$ matrix with orthonormal columns,
$S^{\top}S=I$, and consider the matrix $B=S^{\top}AS$,
with eigenvalues $\theta_1\ge\cdots\ge \theta_m$. Then, the eigenvalues of $B$ interlace those of $A$, that is, 
\begin{equation}
\label{ineq:interlacing}
\lambda_i\ge \theta_i\ge \lambda_{n-m+i},\qquad i=1,\ldots, m.
\end{equation}
\end{theorem}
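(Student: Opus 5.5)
The plan is to use the Courant--Fischer min-max characterization of the eigenvalues of a real symmetric matrix, together with the observation that compression by $S$ is an isometric embedding of $\mathbb R^m$ into $\mathbb R^n$. Since $S^\top S=I$, for every $\y\in\mathbb R^m$ we have $\|S\y\|=\|\y\|$ and $\y^\top B\y=(S\y)^\top A(S\y)$. Consequently, every Rayleigh quotient of $B$ is a Rayleigh quotient of $A$ evaluated on the subspace $S\mathbb R^m$. Moreover, $S$ maps $i$-dimensional subspaces of $\mathbb R^m$ injectively onto $i$-dimensional subspaces of $\mathbb R^n$.

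For the upper inequality $\theta_i\le\lambda_i$, I use
$$\theta_i=\max_{\dim Y=i}\ \min_{0\ne\y\in Y}\frac{\y^\top B\y}{\y^\top\y}.$$
Given an optimal $Y\subseteq\mathbb R^m$, the subspace $SY$ has dimension $i$ in $\mathbb R^n$, and the minimum of the Rayleigh quotient of $A$ over $SY$ equals that of $B$ over $Y$. Hence
$$\theta_i\le\max_{\dim Z=i}\ \min_{0\ne\z\in Z}\frac{\z^\top A\z}{\z^\top\z}=\lambda_i.$$

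For the lower inequality $\theta_i\ge\lambda_{n-m+i}$, there are two routes. The first is to apply the same argument to $-A$ and $-B=S^\top(-A)S$. The eigenvalues of $-B$ in nonincreasing order are $-\theta_{m-i+1}$, and those of $-A$ are $-\lambda_{n-j+1}$. The first part then gives $-\theta_{m-i+1}\le-\lambda_{n-i+1}$ for all $i$. Reindexing with $i\mapsto m-i+1$ yields $\theta_i\ge\lambda_{n-m+i}$.

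Alternatively, one can use the min-max form
$$\theta_i=\min_{\dim Y=m-i+1}\ \max_{0\ne\y\in Y}\frac{\y^\top B\y}{\y^\top\y}.$$
Since $\lambda_{n-m+i}$ is the min over subspaces of dimension $n-(n-m+i)+1=m-i+1$, we again embed via $S$.

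There is no genuine obstacle here. The only points requiring care are the index bookkeeping in the reindexing step, and checking that the embedding preserves dimension, which follows from $S$ having orthonormal, hence independent, columns.
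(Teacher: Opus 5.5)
Your proof is correct. Courant--Fischer transported through the isometric embedding $\y\mapsto S\y$ gives $\theta_i\le\lambda_i$, and applying this to $-A$ and $-B$ (your reindexing $i\mapsto m-i+1$ is right) gives $\theta_i\ge\lambda_{n-m+i}$. The paper gives no proof here and only cites Haemers, whose argument uses the same Rayleigh-quotient idea: evaluate $A$ at a vector $S$ times a vector chosen orthogonal to the first $i-1$ eigenvectors of $A$, then pass to $-A$. So your approach matches the standard proof.
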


Next, we  derive a bound on Kemeny's constant using the fact that the normalized adjacency matrix is  symmetric, allowing the application of interlacing, and the established connection between $K(G)$ and the eigenvalues of this matrix (see Eq. \eqref{Lovasz}).

Two interesting particular cases of Theorem \ref{thm:interlacing} are obtained by choosing appropriately the matrix $S$.

The first one recovers the well-known Cauchy interlacing.

\begin{corollary}[Cauchy interlacing]\label{cor:2.2Haemerspaper}
If $B$ is a principal submatrix of a symmetric matrix $A$, then the eigenvalues of $B$ interlace the eigenvalues of $A$.
\end{corollary}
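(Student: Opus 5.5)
This is a direct specialization of \Cref{thm:interlacing}, so the plan is to exhibit a suitable matrix $S$ with orthonormal columns. Let $B$ be the principal submatrix of $A$ indexed by a set $T=\{t_1<\cdots<t_m\}\subseteq\{1,\dots,n\}$ with $m<n$. Take
\[
S=\begin{pmatrix}{\sf e}^{t_1} & \cdots & {\sf e}^{t_m}\end{pmatrix},
\]
the $n\times m$ matrix whose columns are the standard basis vectors indexed by $T$.

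The first step is to check that $S^\top S=I_m$. This holds because the ${\sf e}^{t_j}$ are distinct standard basis vectors, hence orthonormal. The second step is to compute $S^\top AS$ entrywise:
\[
(S^\top AS)_{jl}=({\sf e}^{t_j})^\top A\,{\sf e}^{t_l}=A_{t_jt_l}.
\]
So $S^\top AS$ is exactly the principal submatrix $B$ of $A$ on the rows and columns in $T$. It is symmetric because $A$ is.

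Applying \Cref{thm:interlacing} with this $S$ gives $\lambda_i\ge\theta_i\ge\lambda_{n-m+i}$ for $i=1,\dots,m$, which is the claimed interlacing. The case $m=n$ is excluded by the hypothesis $m<n$, and in that case there is nothing to prove anyway since $B=A$.

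No step here is a real obstacle. The only point needing care is that the column order of $S$ must match the row and column indexing of $B$, so that $S^\top AS$ equals $B$ itself rather than a permutation-similar copy. Even a permuted copy would have the same spectrum, so the conclusion is unaffected either way.
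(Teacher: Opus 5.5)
Your proposal is correct and matches the paper's approach: the paper obtains Cauchy interlacing by choosing $S$ in Theorem~\ref{thm:interlacing} appropriately, and your $S$, formed from the standard basis vectors indexed by the rows of $B$, is that choice, since it gives $S^\top S=I$ and $S^\top AS=B$. The paper gives no argument beyond this remark, so your verification of those two identities fills in everything needed.
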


The second case consists  in considering $\mathcal{P}=\{U_{1},\ldots,U_{m}\}$  a partition of the
vertex set $V$, with each $U_{i}\neq \emptyset$.
Let $A$ be partitioned according to $\mathcal{P}$, that is
$$A=\left[ \begin{array}{ccc}
A_{1,1} & \cdots & A_{1,m} \\
\vdots &  & \vdots \\
A_{m,1} & \cdots & A_{m,m}
 \end{array} \right],$$
where $A_{i,j}$ denotes the submatrix (block) of $A$ formed by
rows in $U_{i}$ and columns in $U_{j}$. Then, the \emph{quotient matrix} of $A$ with respect to $\mathcal{P}$ is the $m\times m$ matrix  whose entries are the average row sums of the blocks of $A$, more precisely:
\begin{equation*}
(B)_{i,j}=\frac{1}{|U_{i}|}\1_{U_i}^{\top}A_{i,j}\1_{U_j}.
\end{equation*}

\begin{corollary}[Quotient matrix interlacing \cite{H1995}]\label{cor:2.3Haemerspaper}
Suppose $B$ is the quotient matrix of a symmetric partitioned matrix $A$. Then, the eigenvalues of $B$ interlace the eigenvalues of $A$.
\end{corollary}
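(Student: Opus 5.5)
The plan is to reduce the statement to \Cref{thm:interlacing} by exhibiting a matrix $S$ with orthonormal columns such that the quotient matrix $B$ is orthogonally similar to $S^{\top}AS$. The difficulty is that the quotient matrix, with entries $\frac{1}{|U_i|}\1_{U_i}^{\top}A_{i,j}\1_{U_j}$, is in general not symmetric, so it is not itself a compression. To handle this, let $\tilde S$ be the $n\times m$ characteristic matrix of $\mathcal P$, whose $j$-th column is $\1_{U_j}$. Let $D_{\mathcal P}=\operatorname{diag}(|U_1|,\dots,|U_m|)=\tilde S^{\top}\tilde S$, and put $S=\tilde S D_{\mathcal P}^{-1/2}$. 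The columns of $S$ are the normalized indicator vectors $|U_j|^{-1/2}\1_{U_j}$. They have disjoint supports, so $S^{\top}S=I_m$.

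Next I compute the compression. One has $(S^{\top}AS)_{ij}=\frac{1}{\sqrt{|U_i||U_j|}}\1_{U_i}^{\top}A_{i,j}\1_{U_j}$. Comparing with the definition of $B$ gives
\[
B=D_{\mathcal P}^{-1}\tilde S^{\top}A\tilde S=D_{\mathcal P}^{-1/2}\,(S^{\top}AS)\,D_{\mathcal P}^{1/2}.
\]
So $B$ is similar (by a diagonal similarity) to the symmetric matrix $S^{\top}AS$. In particular, $B$ has real eigenvalues, counted with multiplicity, and these are exactly the eigenvalues of $S^{\top}AS$.

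Applying \Cref{thm:interlacing} to $S^{\top}AS$ then gives $\lambda_i\ge\theta_i\ge\lambda_{n-m+i}$ for the eigenvalues $\theta_i$ of $B$, which is the claim. This needs $m<n$. If $m=n$, every part is a singleton, $B=A$, and the statement is trivial, or it is vacuous under the paper's convention that interlacing requires $m<n$. The only point needing care is the similarity step: it is what justifies speaking of the ordered real eigenvalues of the nonsymmetric matrix $B$. Once that is written out, everything else is bookkeeping.
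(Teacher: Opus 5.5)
Your proof is correct and takes the same route the paper intends: the paper cites this as a special case of \Cref{thm:interlacing} obtained by choosing $S$ appropriately, and your normalized characteristic matrix $S=\tilde S D_{\mathcal P}^{-1/2}$ is that choice, as in Haemers' argument. The diagonal similarity $B=D_{\mathcal P}^{-1/2}(S^{\top}AS)D_{\mathcal P}^{1/2}$ is the standard way to handle the non-symmetric quotient matrix, and you rightly single it out as the only step needing care.
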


The most general version of edge interlacing was proposed by Hall, Pate and Stewart \cite{edgeinterlacing}.

\begin{lemma}[{\cite[Proposition 3.2]{edgeinterlacing}}]\label{thm:removingredges}
Let $G$ be a connected unweighted graph and let $H$ be a connected spanning subgraph of $G$ obtained by deleting $r$ edges.
If
$\mu_1\geq \cdots \geq \mu_n=0$
and
$\theta_1\geq \cdots \geq \theta_n=0$
are the eigenvalues of the normalized Laplacian matrices ${\cal{L}}_G$ and ${\cal{L}}_H$, respectively, then
\[\mu_{i-r} \geq \theta_i \geq \mu_{i+r} \qquad \text{for } i=1,2,\ldots, n, \vspace{-5pt}\]
with the convention of $\mu_i=2$ for each  $i\leq 0$ and $\mu_{i}=0$ for each $i\geq n+1$.
\end{lemma}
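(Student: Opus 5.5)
The plan is to work with the generalized Rayleigh quotient rather than with ${\cal L}_G$ directly. The substitution $y=D^{1/2}x$ turns the eigenvalues of ${\cal L}_G=D^{-1/2}L_GD^{-1/2}$ into the critical values of
$$R_G(x)=\frac{x^\top L_Gx}{x^\top D_Gx}=\frac{\sum_{\{a,b\}\in E(G)}(x_a-x_b)^2}{\sum_a k_a x_a^2},\qquad x\neq 0,$$
and similarly $R_H$ gives the eigenvalues of ${\cal L}_H$ with $D_H$. Since $H$ is a connected spanning subgraph, it has no isolated vertices, so $D_H\succ0$ and $R_H$ is well defined. The Courant--Fischer theorem for the pencil $(L,D)$ gives
$$\mu_i=\max_{\dim T=i}\ \min_{0\ne x\in T}R_G(x)=\min_{\dim T=n-i+1}\ \max_{0\ne x\in T}R_G(x),$$
and likewise for $\theta_i$ with $R_H$. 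Let $F=\{u_1v_1,\dots,u_rv_r\}$ be the deleted edges. For every $x$,
$$x^\top L_Hx=x^\top L_Gx-\sum_{j}(x_{u_j}-x_{v_j})^2,\qquad x^\top D_Hx=x^\top D_Gx-\sum_j(x_{u_j}^2+x_{v_j}^2).$$

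\emph{Lower inequality $\theta_i\ge\mu_{i+r}$.} Let $W=\{x: x_{u_j}=x_{v_j},\ j=1,\dots,r\}$, a subspace of codimension at most $r$. On $W$ the numerators of $R_G$ and $R_H$ agree, while the denominator of $R_H$ is smaller. Since the numerator is nonnegative, $R_H(x)\ge R_G(x)$ for all $0\ne x\in W$. Take an $(i+r)$-dimensional $T$ attaining $\mu_{i+r}$ in the max--min formula (assuming $i+r\le n$; otherwise the convention $\mu_{i+r}=0$ makes the claim trivial). Then $T\cap W$ has dimension at least $i$, so
$$\theta_i\ge\min_{T\cap W}R_H\ge\min_{T\cap W}R_G\ge\min_T R_G=\mu_{i+r}.$$

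\emph{Upper inequality $\theta_i\le\mu_{i-r}$.} Let $W'=\{x: x_{u_j}=-x_{v_j},\ j=1,\dots,r\}$, again of codimension at most $r$. Edges may share endpoints, but these are still just $r$ linear constraints, so the codimension bound holds regardless. Put $s=\sum_j(x_{u_j}^2+x_{v_j}^2)$. For $x\in W'$ we have $(x_{u_j}-x_{v_j})^2=2(x_{u_j}^2+x_{v_j}^2)$, so
$$R_H(x)=\frac{N-2s}{\Delta-s},\qquad N=x^\top L_Gx,\ \Delta=x^\top D_Gx,$$
and $\Delta-s=x^\top D_Hx>0$. A cross-multiplication shows $R_H(x)\le R_G(x)$ if and only if $2s\Delta\ge sN$, that is $R_G(x)\le 2$. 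This always holds, because the normalized Laplacian spectrum lies in $[0,2]$; equivalently, $(x_a-x_b)^2\le 2x_a^2+2x_b^2$ termwise. Now take an $(n-i+r+1)$-dimensional $T$ attaining $\mu_{i-r}$ in the min--max formula, for $i-r\ge1$; the convention $\mu_{i-r}=2$ covers the rest. Then $T\cap W'$ has dimension at least $n-i+1$, so
$$\theta_i\le\max_{T\cap W'}R_H\le\max_{T\cap W'}R_G\le\mu_{i-r}.$$

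The main obstacle is finding the right test subspaces. The naive choice $\{x_{u_j}=x_{v_j}=0\}$ makes the two quotients coincide, but it costs two dimensions per edge and only yields a shift of $2r$. The point is that the symmetric constraint $x_u=x_v$ and the antisymmetric constraint $x_u=-x_v$ each cost only one dimension per edge, and each gives a one-sided comparison of $R_H$ with $R_G$. The only inequality that needs checking is the antisymmetric case, where the comparison relies on the a priori bound $R_G\le 2$.
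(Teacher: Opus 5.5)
Your proof is correct and complete. The paper does not prove this lemma; it cites it from Hall, Patel and Stewart, so there is no in-paper argument to compare against.

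What you give is the standard Rayleigh-quotient argument for normalized-Laplacian edge interlacing. The symmetric constraint $x_u=x_v$ makes the numerators agree and shrinks only the $H$-denominator. The antisymmetric constraint $x_u=-x_v$ reduces the comparison to the a priori bound $R_G\le 2$. You apply this to all $r$ deleted edges at once through $r$ linear constraints, rather than iterating a one-edge interlacing result $r$ times.

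Both routes give the same shift $r$. The iterative route needs every intermediate graph to have no isolated vertices, which is automatic here because each contains the connected spanning graph $H$. Your simultaneous version only uses $D_H\succ0$, so connectivity of $G$ and $H$ is not actually needed.

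Your argument also carries over unchanged to weighted graphs, and even to weight reductions. Suppose the weights of $r$ edges drop by $\delta_j>0$. Replace $(x_{u_j}-x_{v_j})^2$ and $x_{u_j}^2+x_{v_j}^2$ by $\delta_j(x_{u_j}-x_{v_j})^2$ and $\delta_j(x_{u_j}^2+x_{v_j}^2)$. Every step, including $R_H\le R_G$ on $W'$, then goes through as before, provided no vertex loses all of its weight. This matters here because the paper states this lemma, and the multi-edge deletion theorem built on it, only for unweighted graphs.
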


We now return to weighted graphs for the principal-submatrix and quotient-partition bounds.

Next, we apply the above results to obtain bounds on Kemeny's constant.

\begin{theorem}\label{thm:vertexinterlacingbounds}
Let $G$ be a connected graph, $D^{-1/2}AD^{-1/2}$ its normalized adjacency matrix  and $B$ a $m\times m$ matrix with eigenvalues $\theta_1\ge\cdots\ge \theta_m$, such that $B$ is
\begin{description}
    \item[$(i)$] a principal submatrix of $D^{-1/2}AD^{-1/2}$, or,
    \item[$(ii)$] the quotient matrix resulting of a partition $\{V_1,\dots,V_m\}$ of the vertex set of $G$.
\end{description}
Then,
$$
 \sum_{i=2}^{m}\frac{1}{1-\theta_i} + \dfrac{(n-m)}{2} \leq K(G)\leq \sum_{i=2}^{m}\frac{1}{1-\theta_i} + \dfrac{(n-m)}{1-\lambda_2}.
$$ 
\end{theorem}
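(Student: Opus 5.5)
The plan is to read both inequalities off the interlacing inequalities \eqref{ineq:interlacing} applied to $M=D^{-1/2}AD^{-1/2}$, whose eigenvalues are $1=\lambda_1>\lambda_2\ge\cdots\ge\lambda_n\ge-1$. Combined with \eqref{Lovasz}, $K(G)=\sum_{j=2}^{n}\frac{1}{1-\lambda_j}$, the sum splits into an $(m-1)$-term block that is compared with $\sum_{i=2}^m\frac{1}{1-\theta_i}$ and an $(n-m)$-term block that is bounded crudely. The only step that is not mechanical is putting $B$ in the form required by \Cref{thm:interlacing}.

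\textbf{Interlacing in both cases.} In case $(i)$, \Cref{cor:2.2Haemerspaper} gives $\lambda_i\ge\theta_i\ge\lambda_{n-m+i}$ for $i=1,\dots,m$ directly. In case $(ii)$ I would invoke \Cref{cor:2.3Haemerspaper}. The quotient matrix (average row sums of the blocks of $M$) is not symmetric in general, so I will check that it is similar to $S^\top M S$, where $S$ has columns $\1_{V_i}/\sqrt{|V_i|}$; this is $\operatorname{diag}(|V_i|^{-1/2})\,\Pi^\top M\Pi\,\operatorname{diag}(|V_i|^{-1/2})$ with $\Pi$ the characteristic matrix, conjugated by $\operatorname{diag}(|V_i|^{1/2})$. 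Hence its eigenvalues are real and interlace those of $M$ by \Cref{thm:interlacing}. The same argument applies verbatim to the degree-weighted quotient $B(\mathcal P)$ of Definition~\ref{def:weighted-quotient}, which is itself a compression. In either case, for $i\ge2$ we get $\theta_i\le\lambda_i\le\lambda_2<1$, because $G$ is connected. So every term $\frac{1}{1-\theta_i}$ with $i\ge2$ is defined and positive, and $t\mapsto\frac{1}{1-t}$ is increasing on $(-\infty,1)$.

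\textbf{Lower bound.} I would write
$$K(G)=\sum_{i=2}^{m}\frac{1}{1-\lambda_i}+\sum_{j=m+1}^{n}\frac{1}{1-\lambda_j}.$$
In the first sum, $\lambda_i\ge\theta_i$ gives $\frac{1}{1-\lambda_i}\ge\frac{1}{1-\theta_i}$. In the second sum there are $n-m$ terms, and $\lambda_j\ge-1$ gives each term at least $\frac12$.

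\textbf{Upper bound.} I would instead split as
$$K(G)=\sum_{j=2}^{n-m+1}\frac{1}{1-\lambda_j}+\sum_{i=2}^{m}\frac{1}{1-\lambda_{n-m+i}}.$$
In the second sum, $\theta_i\ge\lambda_{n-m+i}$ gives $\frac{1}{1-\lambda_{n-m+i}}\le\frac{1}{1-\theta_i}$. The first sum has $n-m$ terms, each at most $\frac{1}{1-\lambda_2}$ since $\lambda_j\le\lambda_2$ for $j\ge2$.
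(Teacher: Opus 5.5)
Your proposal is correct and follows the paper's proof essentially step for step. For the lower bound you pair $\lambda_i\ge\theta_i$ ($2\le i\le m$) with $\lambda_j\ge-1$ on the remaining $n-m$ terms; for the upper bound you pair $\lambda_{n-m+i}\le\theta_i$ with $\lambda_j\le\lambda_2$ on the first $n-m$ terms; and case (ii) goes through quotient-matrix interlacing. Your extra checks fill in details the paper leaves implicit: that the nonsymmetric quotient matrix is similar to the compression $S^\top MS$, and that $\theta_i\le\lambda_2<1$ for $i\ge2$, so every reciprocal is positive.
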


\begin{proof} (i) By Corollary \ref{cor:2.2Haemerspaper}, it follows that $\lambda_i\geq \theta_i$  and in consequence $\frac{1}{1-\lambda_i} \geq \frac{1}{1-\theta_i}  \text{ for }i=2,\ldots,m$. Moreover, since for the normalized adjacency matrix it holds that $\lambda_i\geq -1$ \cite{Lo16}, it follows that $\frac{1}{1-\lambda_i} \geq \frac{1}{2} \text{ for }i=m+1,\ldots,n$. From Eq. \eqref{Lovasz} we obtain the desired lower bound. 

To get the upper bound we observe that from Corollary \ref{cor:2.2Haemerspaper} 
$$\dfrac{1}{1-\lambda_{n-m+j}}\le \dfrac{1}{1-\theta_j}, \hspace{.25cm}j=2,\ldots,m.$$
Moreover, $\frac{1}{1-\lambda_i} \leq \frac{1}{1-\lambda_2}$ and hence
$$K(G)=\sum_{i=2}^{n}\frac{1}{1-\lambda_i}\leq \sum_{i=2}^{m}\frac{1}{1-\theta_i} + \dfrac{(n-m)}{1-\lambda_2}.$$

(ii) Analogously, the result follows  using Eq. \eqref{Lovasz} and applying Corollary \ref{cor:2.3Haemerspaper}. 
\end{proof} 

Similarly as we did in previous sections, Theorem \ref{thm:vertexinterlacingbounds} can also be used to relate Kemeny's constant to graph
parameters that are naturally described by vertex partitions or by induced substructures (we will illustrate it in Corollary \ref{coro:interlacinggraphconductance}). Indeed, whenever a graph parameter determines a partition of the vertex set, the associated quotient matrix may be
inserted into Theorem \ref{thm:vertexinterlacingbounds}(ii), while parameters characterized through a distinguished vertex subset may lead to a principal submatrix to
which Theorem \ref{thm:vertexinterlacingbounds}(i) applies. In this way, spectral information coming from such structural descriptions can be converted directly into bounds on \(K(G)\). This is potentially useful in particular for graph
parameters whose exact computation is difficult (NP-hard), since any admissible
partition or substructure already yields a computable spectral bound.

For the lower bound from Theorem \ref{thm:vertexinterlacingbounds}$(i)$ to be close to equality, the omitted terms $1/(1-\lambda_i)$ must be close to $1/2$, so the corresponding normalized-adjacency eigenvalues must be close to $-1$.

\begin{corollary}\label{coro:vertexinterlacingbounds}
  Let $G$ be a connected unweighted graph with degrees $k_1,\dots,k_n$ and normalized
  adjacency eigenvalues $1=\lambda_1>\lambda_2\ge\cdots\ge\lambda_n$.
  Then
\begin{equation}\label{eq:K(G)<}
    K(G)\ \le\ \min_{i\sim j}\frac{\sqrt{k_ik_j}}{\sqrt{k_ik_j}+1}
              +\frac{n-2}{1-\lambda_2}.
\end{equation}
	Equality holds if and only if
$G\cong K_n$.
\end{corollary}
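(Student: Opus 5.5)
Fix an edge $i\sim j$ and apply Theorem~\ref{thm:vertexinterlacingbounds}(i) with $m=2$, taking $B$ to be the $2\times 2$ principal submatrix of $M=D^{-1/2}AD^{-1/2}$ indexed by $\{i,j\}$. Since $G$ is unweighted and has no loops,
$$B=\begin{pmatrix}0& 1/\sqrt{k_ik_j}\\ 1/\sqrt{k_ik_j}&0\end{pmatrix},$$
with eigenvalues $\theta_1=1/\sqrt{k_ik_j}$ and $\theta_2=-1/\sqrt{k_ik_j}$. The upper bound of Theorem~\ref{thm:vertexinterlacingbounds} then gives
$$K(G)\le \frac{1}{1+1/\sqrt{k_ik_j}}+\frac{n-2}{1-\lambda_2}=\frac{\sqrt{k_ik_j}}{\sqrt{k_ik_j}+1}+\frac{n-2}{1-\lambda_2}.$$
This holds for every edge $i\sim j$, so minimizing over edges gives \eqref{eq:K(G)<}.

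For equality, I would use the proof of Theorem~\ref{thm:vertexinterlacingbounds}. There the upper bound comes from two groups of inequalities. The first is the Cauchy interlacing step $\lambda_n\le\theta_2$, which gives $1/(1-\lambda_n)\le 1/(1-\theta_2)$. The second is $1/(1-\lambda_\ell)\le 1/(1-\lambda_2)$ for the $n-2$ indices $\ell=2,\dots,n-1$. In the written proof this accounting is loose, so I would restate it precisely: $K(G)=\sum_{\ell=2}^{n-1}\frac{1}{1-\lambda_\ell}+\frac{1}{1-\lambda_n}$, and each group is bounded termwise. Equality at the minimizing edge then forces two conditions: $\lambda_2=\cdots=\lambda_{n-1}$, and $\lambda_n=-1/\sqrt{k_ik_j}$.

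The main obstacle is deducing $G\cong K_n$ from these conditions. I would start from the trace identity $\sum_\ell \lambda_\ell=\tr M=0$, which gives $(n-2)\lambda_2+\lambda_n=-1$. Next I would use $\tr M^2=\sum_{u\sim v}\frac{2}{k_uk_v}$. With the spectrum $\{1,\lambda_2^{(n-2)},\lambda_n\}$, the matrix $M$ has at most three distinct eigenvalues, and $M-\lambda_2 I$ has rank at most $2$. I would pursue the rank condition, since it seems the cleanest route. If $\lambda_2<0$, then $M$ has exactly one positive eigenvalue, so by Sylvester and Smith's theorem (as in Corollary~\ref{cor:kemeny-chromatic}) $G$ is complete multipartite. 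A complete multipartite graph with a part of size at least $2$ has $\lambda=0$ in its spectrum, which forces $\lambda_2=0$ and then $\lambda_n=-1$, i.e.\ $k_ik_j=1$, so $G=K_2$, which is excluded or trivial. Hence all parts are singletons and $G=K_n$.

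If instead $\lambda_2\ge 0$, then $\lambda_n=-1-(n-2)\lambda_2\le -1$. Since $\lambda_n\ge -1$, this forces $\lambda_2=0$ and $\lambda_n=-1$, and again $k_ik_j=1$, which means $n=2$. For $n=2$ we have $K_2$ itself, and the statement includes it. Conversely, for $K_n$ the spectrum is $\{1,(-\tfrac{1}{n-1})^{(n-1)}\}$ and all degrees equal $n-1$. The right side is then $\frac{n-1}{n}+\frac{(n-2)(n-1)}{n}=\frac{(n-1)^2}{n}=K(K_n)$, so equality holds.
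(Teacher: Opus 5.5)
Your proof is correct. The inequality part is the paper's argument; the equality characterization takes a different route.

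The paper works with an eigenvector rather than with scalar spectral data. Since $\x=\frac{1}{\sqrt2}({\sf e}^u-{\sf e}^v)$ has Rayleigh quotient $-a=\lambda_n$, it is an eigenvector for $\lambda_n$. Orthogonality to the Perron vector $\z$ gives $k_u=k_v$. The three-value spectrum then yields $M=cI+(1-c)\z\z^\top+(-a-c)\x\x^\top$, so $M_{st}=(1-c)z_sz_t>0$ for every pair $\{s,t\}\neq\{u,v\}$, which gives $G\cong K_n$. That argument is self-contained (Rayleigh quotient, spectral theorem, positivity of $\z$), needs no case split, and uses the exact value $\lambda_n=\theta_2$ essentially.

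Your route uses only $\tr M=0$, the inertia of $M$, and Smith's theorem, which the paper already invokes in Corollary~\ref{cor:kemeny-chromatic}.
\begin{itemize}
\item In the case $\lambda_2\ge0$, the trace identity together with $\lambda_n\ge-1$ forces $\lambda_n=-1$. This contradicts $\lambda_n=-1/\sqrt{k_ik_j}>-1$ for $n\ge3$.
\item In the case $\lambda_2<0$, you never use the value of $\lambda_n$ or the multiplicity structure. You are in effect proving that the only connected graph with $\lambda_2(M)<0$ is $K_n$.
\end{itemize}
This makes it transparent why equality is so restrictive, at the cost of relying on an external structural theorem.

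Some small points to tidy:
\begin{itemize}
\item For a non-singleton part, the kernel vector is ${\sf e}^s-{\sf e}^t$ with $s,t$ twins. It lies in $\ker M$ only because twins have equal degree; say so.
\item Once $0\in\spec(M)$, you have an immediate contradiction with $\lambda_2<0$. The detour through $\lambda_n=-1$ and $K_2$ in that case is unnecessary.
\item The remarks about $\tr M^2$ and the rank of $M-\lambda_2 I$ are never used and can be dropped.
\end{itemize}
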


\begin{proof}
  Apply Theorem~\ref{thm:vertexinterlacingbounds}$(i)$ with $m=2$ and $S$
  the two coordinate vectors of an adjacent pair $i\sim j$. Then $B$ is
  the $2\times2$ block 
  $$B^{(ij)}=
  \begin{pmatrix}0&\frac1{\sqrt{k_ik_j}}\\\frac1{\sqrt{k_ik_j}}&0\end{pmatrix},$$
  so $\theta_2=-1/\sqrt{k_ik_j}$ and
  $\tfrac{1}{1-\theta_2}=\tfrac{\sqrt{k_ik_j}}{\sqrt{k_ik_j}+1}<1$.
  The upper bound of Theorem~\ref{thm:vertexinterlacingbounds} holds for
  every such pair; a non-adjacent pair would instead give
  $\tfrac{1}{1-\theta_2}=1$, which is never smaller. Minimizing over
  adjacent pairs gives the claim.

Now, we characterize the equality case. First suppose that $G=K_n$. Then $k_i=n-1$ for every vertex $i$, and
  $$\lambda_2=\cdots=\lambda_n=-\frac1{n-1}. $$
  Hence both sides of \eqref{eq:K(G)<} have the equal value
  $\frac{(n-1)^2}{n}$.

  Conversely, if $n=2$, then the only connected graph is $K_2$, and the result is immediate. We therefore assume that $n\ge 3$.
     Let  $uv$ be an edge with 
  $a=1/\sqrt{k_uk_v}=\max_{i\sim j} 1/\sqrt{k_i k_j}$, and assume that the euqity holds for $uv$, that is
   $$  K(G)=\frac{\sqrt{k_uk_v}}{\sqrt{k_uk_v}+1}
  +\frac{n-2}{1-\lambda_2}.  $$
  
    The eigenvalues of $B^{(uv)}$ are
  $ \theta_1=a$ and $ \theta_2=-a$.
  Consequently,
  $$
  \frac1{1-\theta_2}
  =
  \frac1{1+a}
  =
  \frac{\sqrt{k_uk_v}}{\sqrt{k_uk_v}+1}.
  $$
  
  The upper bound corresponding to the edge $uv$ is obtained as follows:
  \begin{equation}\label{eq:K(G)=for-uv}
  K(G)=\sum_{i=2}^{n-1}\frac1{1-\lambda_i}
  +\frac1{1-\lambda_n}\le
  \frac{n-2}{1-\lambda_2}
  +
  \frac1{1-\theta_2}=
  \frac{n-2}{1-\lambda_2}
  +  	\frac1{1+a}. 
  \end{equation} 
  Indeed, $\lambda_i\le\lambda_2$ for $i=2,\ldots,n-1$, while Cauchy interlacing gives
  $\lambda_n\le\theta_2=-a.$
  Equality in \eqref{eq:K(G)=for-uv} implies
  $ \lambda_2=\lambda_3=\cdots=\lambda_{n-1}=:c  $ and $\lambda_n=\theta_2=-a.$
  Thus the normalized adjacency spectrum of $G$ is
  $ \spec(M)
  =  \{1,c^{(n-2)},-a\}. $
  
  Let  $ \x=\frac{1}{\sqrt2}({\sf e}^{u}-{\sf e}^{v}).$
  Since the only nonzero entries of $\x$ correspond to $u$ and $v$, we have
  $ \x^\top M\x=-a=\lambda_n.$
  Since $\lambda_n$ is the smallest eigenvalue of the symmetric matrix $M$, equality in the Rayleigh quotient implies
  $ M\x=-a\x.$
  
  Let
  $$\z= \frac{1} {\sqrt{\vol(G)}}D^{1/2}\1.$$
  Then $\z$ is a positive unit eigenvector of $M$ corresponding to the eigenvalue $1$. Since $M$ is symmetric and $1\ne-a$, the eigenvectors $\z$ and $\x$ are orthogonal. Therefore
  $$
  0=\z^\top\x
  =
  \frac{\sqrt{k_u}-\sqrt{k_v}}
  {\sqrt{2\vol(G)}},
  $$
  and hence
  $ k_u=k_v.$
  
  By the spectral theorem, the eigenspace corresponding to $c$ is the orthogonal complement of the span of $\z$ and $\x$. Therefore
  $$
  M
  =
  cI+(1-c)\z\z^\top+(-a-c)\x\x^\top.
  $$
  This identity remains valid when $c=-a$, because in that case the last coefficient is zero.
  
  Now let $s,t$ be two distinct vertices such that $\{s,t\}\ne\{u,v\}$. Since $\x$ is supported only on $u$ and $v$, we have
  $ x_sx_t=0.$
  Hence
  $  M_{st}=(1-c)z_sz_t.$
  The graph $G$ is connected, so the eigenvalue $1$ of $M$ is simple, and therefore $c<1$. Also, every entry of $\z$ is positive. It follows that
  $  M_{st}>0.$
  Since
  $ M_{st}=\frac{A_{st}}{\sqrt{k_sk_t}},$
  we obtain $A_{st}=1$. Thus every two distinct vertices other than possibly $u$ and $v$ are adjacent. Since $uv\in E(G)$ by construction, every two distinct vertices of $G$ are adjacent. Therefore
  $ G\cong K_n.$  
\end{proof}

It is well known that adding an edge to a graph can cause Kemeny's constant to increase, decrease or remain unchanged. Indeed, consider for example the graphs minimizing (maximizing) Kemeny's constant, the complete graphs (barbell graphs), and remove an edge -- this immediately increases (decreases) the constant. Kirkland et al. \cite{KLMZ2024} provided a quantitative analysis of this behavior when the initial graph is a tree of fixed order. In this section, we  analyze how Kemeny's constant changes when multiple edges are removed.

To do so, we apply the edge interlacing technique introduced by Hall, Patel, and Stewart \cite{edgeinterlacing} (see Lemma \ref{thm:removingredges}). As a particular case, we show that the removal of a single branch follows directly as a corollary of our more general results.

\begin{theorem}\label{thm:detemingredges}
Let $G$ be a connected unweighted graph, and let $H$ be a connected spanning subgraph of $G$ obtained by deleting $r\leq n-1$ edges. If
$\mu_1\geq \cdots \geq \mu_n=0$
and
$\theta_1\geq \cdots \geq \theta_n=0$
are the eigenvalues of the normalized Laplacian matrices ${\cal{L}}_G$ and ${\cal{L}}_H$, respectively. Then

$$\frac{r}{\mu_1}+\sum_{j=1}^{n-r-1}\frac{1}{\theta_{j}} \leq K(G)\leq \sum_{j=r+1}^{n-1} \frac{1}{\theta_{j}}+\frac{r}{\mu_{n-1}}.$$

\end{theorem}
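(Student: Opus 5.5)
We use the expression \eqref{Lovasz}, $K(G)=\sum_{j=1}^{n-1}1/\mu_j$, and compare its terms with the eigenvalues of ${\cal L}_H$ through Lemma~\ref{thm:removingredges}. Since $H$ is connected, $\theta_{n-1}>0$, so every reciprocal $1/\theta_j$ for $j\le n-1$ is finite. Likewise $\mu_{n-1}>0$ because $G$ is connected. The hypothesis $r\le n-1$ keeps the index ranges $1,\dots,n-r-1$ and $r+1,\dots,n-1$ meaningful; each may be empty when $r=n-1$.

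\emph{Lower bound.} We split the sum into
$$K(G)=\sum_{j=1}^{r}\frac1{\mu_j}+\sum_{j=r+1}^{n-1}\frac1{\mu_j}.$$
In the first sum each $\mu_j\le\mu_1$, so it is at least $r/\mu_1$. In the second sum we reindex $j=i+r$ with $i=1,\dots,n-r-1$. The right-hand inequality of Lemma~\ref{thm:removingredges}, $\theta_i\ge\mu_{i+r}$, gives $1/\mu_{i+r}\ge 1/\theta_i$, since all these quantities are positive. Here $i+r\le n-1$, so the convention $\mu_i=0$ for $i\ge n+1$ is never invoked and $\mu_{i+r}>0$. Adding the two estimates yields the left-hand inequality.

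\emph{Upper bound.} We split instead as
$$K(G)=\sum_{j=1}^{n-r-1}\frac1{\mu_j}+\sum_{j=n-r}^{n-1}\frac1{\mu_j}.$$
The last $r$ terms are each at most $1/\mu_{n-1}$, so their sum is at most $r/\mu_{n-1}$. For the first $n-r-1$ terms we use the left-hand inequality of Lemma~\ref{thm:removingredges} with $i=j+r$, namely $\mu_j\ge\theta_{j+r}$. Since $j+r\le n-1$, we have $\theta_{j+r}>0$, and therefore $1/\mu_j\le 1/\theta_{j+r}$. Reindexing over $j+r=r+1,\dots,n-1$ gives $\sum_{j=r+1}^{n-1}1/\theta_j$, which is the right-hand inequality.

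The argument is mostly bookkeeping. The one point that needs care is the index ranges. We must check that the lemma is only applied where neither boundary convention ($\mu_i=2$ for $i\le0$, $\mu_i=0$ for $i\ge n+1$) enters. We must also check that no zero eigenvalue appears in a denominator, which is exactly where the connectivity of $H$ (giving $\theta_{n-1}>0$) and of $G$ (giving $\mu_{n-1}>0$) is used.
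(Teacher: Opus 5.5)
Your proof is correct and follows the paper's argument: split $K(G)=\sum_{j=1}^{n-1}1/\mu_j$ into $r$ terms bounded by $1/\mu_1$ (respectively $1/\mu_{n-1}$) and $n-r-1$ terms compared through the interlacing inequalities $\theta_i\ge\mu_{i+r}$ and $\mu_j\ge\theta_{j+r}$ for indices $1,\dots,n-r-1$. Your checks of positivity and index ranges spell out what the paper compresses into ``all eigenvalues in these inequalities are positive'' and ``empty sums cover the endpoint cases.''
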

\begin{proof}
For $1\le j\le n-r-1$, Lemma~\ref{thm:removingredges} gives
$$\mu_j\ge\theta_{j+r},\qquad \theta_j\ge\mu_{j+r}.$$
All eigenvalues in these inequalities are positive. Taking reciprocals and splitting the sums gives
\begin{align*}
K(G)&=\sum_{i=1}^{r}\frac{1}{\mu_i}+\sum_{j=1}^{n-r-1}\frac{1}{\mu_{j+r}}
\ge\frac{r}{\mu_1}+\sum_{j=1}^{n-r-1}\frac{1}{\theta_j},\\
K(G)&=\sum_{j=1}^{n-r-1}\frac{1}{\mu_j}+\sum_{i=n-r}^{n-1}\frac{1}{\mu_i}
\le\sum_{j=1}^{n-r-1}\frac{1}{\theta_{j+r}}+\frac{r}{\mu_{n-1}}.
\end{align*}
Reindexing the first sum in the upper bound gives the stated result. Empty sums cover the endpoint cases.
\end{proof}

%
%

Observe that the bounds in the above theorem can be written as
$$K(H)-\sum_{j=n-r}^{n-1}\frac{1}{\theta_{j}}+\frac{r}{\mu_1} \leq K(G)\leq K(H)-\sum_{j=1}^{r} \frac{1}{\theta_{j}}+\frac{r}{\mu_{n-1}}.$$


\begin{corollary}\label{coro:removingoneedge}
Let $G$ be a connected unweighted graph, and let $H$ be a subgraph of $G$ obtained by deleting an edge such that $H$ is connected. Then
$$K(H)-\frac{1}{\theta_{n-1}}+\frac{1}{\mu_1} \leq K(G) \leq K(H)-\frac{1}{\theta_1}+ \frac{1}{\mu_{n-1}}.$$  
\end{corollary}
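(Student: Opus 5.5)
This is the case $r=1$ of Theorem~\ref{thm:detemingredges}, in its rewritten form
$$K(H)-\sum_{j=n-r}^{n-1}\frac{1}{\theta_{j}}+\frac{r}{\mu_1} \leq K(G)\leq K(H)-\sum_{j=1}^{r} \frac{1}{\theta_{j}}+\frac{r}{\mu_{n-1}},$$
so the plan is to specialize and then check the hypotheses.

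First, confirm the hypotheses. Since $G$ is connected with $n\ge2$ vertices and $H$ is obtained by deleting one edge, we have $r=1\le n-1$. The graph $H$ is a connected spanning subgraph of $G$: deleting an edge keeps all vertices, and $H$ is connected by assumption. Hence Theorem~\ref{thm:detemingredges} applies with $r=1$.

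Next, set $r=1$ and identify the sums. The lower-bound sum $\sum_{j=n-r}^{n-1}1/\theta_j$ reduces to the single term $1/\theta_{n-1}$. The upper-bound sum $\sum_{j=1}^{r}1/\theta_j$ reduces to $1/\theta_1$. The terms $r/\mu_1$ and $r/\mu_{n-1}$ become $1/\mu_1$ and $1/\mu_{n-1}$. This is exactly the stated inequality.

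The only point needing care is the displayed rewriting itself. We use
$$K(H)=\sum_{j=1}^{n-1}1/\theta_j,$$
which is \eqref{Lovasz} applied to $H$. It is valid because $H$ is connected, so $\theta_1,\dots,\theta_{n-1}>0$. Subtracting the last $r$ terms gives $\sum_{j=1}^{n-r-1}1/\theta_j$, and subtracting the first $r$ terms gives $\sum_{j=r+1}^{n-1}1/\theta_j$. These match the two sums in Theorem~\ref{thm:detemingredges}.

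I expect no real obstacle here. The degenerate case $n=2$ cannot occur, because deleting the only edge of $K_2$ disconnects it, so $n\ge3$ and all the indices used are meaningful.
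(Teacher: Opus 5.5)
Your proposal is correct and follows exactly the paper's route: the corollary is the case $r=1$ of Theorem~\ref{thm:detemingredges}, in the rewritten form displayed just before the corollary, which comes from $K(H)=\sum_{j=1}^{n-1}1/\theta_j$. Your checks that $H$ is a connected spanning subgraph, that $r=1\le n-1$, and that $n\ge3$ are appropriate and complete.
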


\begin{example}
Let $n\ge4$ and consider the complete graph $K_n$, we know that $\mu_n=0$ and $\mu_i=n/(n-1)$, $i=1,\ldots,n-1$ and $K(K_n)=(n-1)^2/n.$ Let $H=K_n\setminus\{e\}$, where $e$ is an edge of $K_n.$ In this case, we can compute the eigenvalues for the normalized Laplacian and we obtain
$$\left\{ \frac{n+1}{n-1},\,  \left(\frac{n}{n-1}\right)^{(n-3)},\,  1,\,  0 \right\}.$$
Then, $$K(H)=1+\dfrac{n-1}{n+1}+\dfrac{(n-1)(n-3)}{n}.$$
Applying Corollary \ref{coro:removingoneedge}, we obtain the following bounds 
$$\dfrac{(n-1)(n-2)}{n}+\dfrac{(n-1)}{(n+1)}\le \dfrac{(n-1)^2}{n}\le \dfrac{(n-1)(n-2)}{n}+1,$$
which are asymptotically tight. 
\end{example}

  As a consequence of Corollary \ref{coro:removingoneedge} we obtain the following result, involving the graph conductance. 
	\begin{corollary}\label{coro:interlacinggraphconductance}
		Let $G$ be a connected unweighted graph and let $e$ be an edge of $G$ such that $H = G - \left\{e\right\}$ is connected. Then $$K(H)\geq K(G)+\frac{1}{2}-\frac{2}{\phi(G)^2}.$$
	\end{corollary}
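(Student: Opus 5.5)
We start from the left inequality of Corollary~\ref{coro:removingoneedge}, rearranged as
$$K(H)\ \ge\ K(G)+\frac{1}{\theta_{n-1}}-\frac{1}{\mu_1},$$
where $\mu_1\ge\cdots\ge\mu_n=0$ and $\theta_1\ge\cdots\ge\theta_n=0$ are the normalized Laplacian eigenvalues of $G$ and $H$. It then suffices to show two separate estimates:
$$-\frac{1}{\mu_1}\ \ge\ -\frac{2}{\phi(G)^2}\qquad\text{and}\qquad \frac{1}{\theta_{n-1}}\ \ge\ \frac12 .$$

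For the second estimate we use $\theta_{n-1}\le 2$, since all normalized Laplacian eigenvalues lie in $[0,2]$. Hence $1/\theta_{n-1}\ge 1/2$.

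For the first estimate we use Cheeger's inequality $\mu_{n-1}\ge\phi(G)^2/2$, recalled in Section~\ref{sec:conductance-cor}. Since $\mu_1\ge\mu_{n-1}>0$, this gives
$$\frac{1}{\mu_1}\le\frac{1}{\mu_{n-1}}\le\frac{2}{\phi(G)^2}.$$
Adding the two estimates yields $K(H)\ge K(G)+\frac12-\frac{2}{\phi(G)^2}$.

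The only point needing care is that Cheeger's inequality bounds the smallest positive eigenvalue $\mu_{n-1}$, whereas the term appearing here involves the largest eigenvalue $\mu_1$. The monotonicity $\mu_1\ge\mu_{n-1}$ bridges this, at the cost of a weak bound. One could also use the cruder estimate $\mu_1\ge n/(n-1)$ (the average of the $\mu_j$) to get something independent of $\phi$. We state the conductance form since it matches the claim. Connectivity of $H$ is needed only to invoke Corollary~\ref{coro:removingoneedge} and to ensure $\theta_{n-1}>0$.
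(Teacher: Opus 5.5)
Your starting inequality has the wrong direction. The left inequality of Corollary~\ref{coro:removingoneedge} reads $K(H)-\frac{1}{\theta_{n-1}}+\frac{1}{\mu_1}\le K(G)$. This is equivalent to $K(H)\le K(G)+\frac{1}{\theta_{n-1}}-\frac{1}{\mu_1}$, which is an \emph{upper} bound on $K(H)$, so no lower bound on $K(H)$ can come from it. The inequality you wrote, $K(H)\ge K(G)+\frac{1}{\theta_{n-1}}-\frac{1}{\mu_1}$, is false in general. Take $G=K_4$ and $H=K_4-e$. Then $K(G)=\frac94$ and $\mu_1=\frac43$. Also $\spec(\mathcal L_H)=\{\frac53,\frac43,1,0\}$, so $\theta_3=1$ and $K(H)=\frac35+\frac34+1=\frac{47}{20}$. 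Your inequality would require $K(H)\ge\frac94+1-\frac34=\frac52$, which fails. Your two auxiliary estimates ($1/\theta\ge\frac12$ from $\theta\le2$, and Cheeger) are correct, but they are attached to an invalid inequality, so the argument does not prove the claim.

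The fix is to use the \emph{right} inequality of Corollary~\ref{coro:removingoneedge}, namely $K(G)\le K(H)-\frac{1}{\theta_1}+\frac{1}{\mu_{n-1}}$. It rearranges to $K(H)\ge K(G)+\frac{1}{\theta_1}-\frac{1}{\mu_{n-1}}$. Since $\theta_1\le 2$, you get $\frac{1}{\theta_1}\ge\frac12$. Cheeger's inequality $\mu_{n-1}\ge\phi(G)^2/2$ gives $\frac{1}{\mu_{n-1}}\le\frac{2}{\phi(G)^2}$, and combining the two yields the statement. The difficulty you flagged, that Cheeger controls $\mu_{n-1}$ while your inequality involves $\mu_1$, was a sign that you had picked the wrong side. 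In the correct inequality $\mu_{n-1}$ appears directly, so no step through $\mu_1\ge\mu_{n-1}$ is needed. On the example above, the corrected form gives $K(H)\ge\frac94+\frac35-\frac34=\frac{21}{10}$, which is consistent with $K(H)=\frac{47}{20}$.
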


\subsection*{Declaration of AI}
The authors acknowledge the use of ChatGPT (GPT-5.6; accessed September 2026) to explore proof strategies, and subsequently for language editing and symbolic and numerical checks. All formal statements, arguments, and proofs in the manuscript were written and verified by the authors, who take full responsibility for the accuracy and integrity of the article.

\subsection*{Acknowledgements}
 
Aida Abiad is supported by the Dutch Research Council (NWO) through the grant \linebreak VI.Vidi.213.085. This work has been partially supported by the Spanish
Research Council under project 
PID2021-122501NB-I00 and by the Universitat Polit\`ecnica de Catalunya under funds AGRUPS-UPC 2025. \'A. Samperio was supported by a FPI grant of the Research Project PGC2018-096446-BC21. Part of this work was done  while the first author was visiting the Simons Institute for the Theory of Computing at UC Berkeley.

 


\printbibliography[heading=bibintoc]

\end{document}